\theoremstyle{plain}
\newtheorem{thm}{Theorem}[section]
\newtheorem{lem}[thm]{Lemma}
\newtheorem{pro}[thm]{Proposition}
\theoremstyle{definition}
\newtheorem{defn}[thm]{Definition}
\newtheorem{rem}[thm]{Remark}
\newcommand{\Gtwo}{\ifmmode{{\rm G}_2}\else{${\rm G}_2$}\fi}
\date{\today}
\begin{document}

\title[ParaSasakian manifolds ...]{ParaSasakian manifolds with constant paraholomorphic sectional curvature}

\author{Simeon Zamkovoy}
\address[Zamkovoy]{University of Sofia "St. Kl. Ohridski"\\
Faculty of Mathematics and Informatics\\
Blvd. James Bourchier 5\\
1164 Sofia, Bulgaria} \email{zamkovoy@fmi.uni-sofia.bg}

\begin{abstract}
{In this paper paraSasakian manifolds with constant
paraholomorphic sectional curvature are considered.}

MSC: 53C15, 53C50, 53C25, 53C26, 53B30
\end{abstract}

\maketitle \setcounter{tocdepth}{3} \tableofcontents

\section{Introduction}

A Riemannian manifold is said to admit the axiom of planes if
there exists a 2-dimensional totally geodesic submanifold tangent
to any 2-dimensional section at every point of the manifold. It is
said to admit the free mobility if there exists an isometry which
carries any point and any frame attached to the point to any other
point and any other frame attached to the point \cite{Car}.

It is well known that a Riemannian manifold satisfies the axiom of
planes or the free mobility if and only if it is of constant
curvature \cite{Car}. A similar result for complex manifolds is
proved in \cite{YM}. More precisely, a K\"{a}hler manifold with
constant holomorphic curvature admits either of the so-called
axiom of holomorphic planes and the holomorphic free mobility, and
conversely. For a para-K\"{a}hler manifold, the concept of
paraholomorphic free mobility and the axiom of paraholomorphic
planes are given in \cite{GB,RAG}.

Some results on manifolds with constant nonvanishing
paraholomorphic sectional curvature are given in \cite{B} and
\cite{GB}. The general expression for the metric and the  almost
product structure of the para-K\"{a}hler space form in normal
coordinates is given in \cite{GA}.

The geometry of almost contact manifolds is a natural extension of
the almost Hermitian geometry in the odd dimensional case. Almost
contact manifolds satisfying the the axiom of planes or the axiom
of free mobility are considered in \cite{O}. It is  proved that a
Sasakian manifold satisfies the axiom of $C$-holomorphic planes or
the one of $C$-holomorphic free mobility if and only if it is of
constant $C$-holomorphic curvature. Similarly, the geometry of
almost paracontact manifolds is a natural extension of the almost
paraHermitian geometry in the odd dimensional case \cite{Z1,S1}.

A paracontact metric manifold $M$ is a (2n+1)-dimensional manifold
equipped with a 1-form $\eta$, a tensor field $\varphi$ of type
$(1,1)$, a vector field $\xi$ satisfying $\eta (\xi)=1$,
$\varphi\xi=0$ and $\varphi^2 = id - \eta \otimes \xi$, and a
compatible metric $g$ satisfying $g(\varphi X,\varphi
Y)=-g(X,Y)+\eta (X)\eta (Y)$ and $g(X,\varphi Y)=d\eta(X,Y)$,
where $d\eta(X,Y)=\frac12(X\eta(Y)-Y\eta(X)-\eta([X,Y]))$. The
manifold $M^{(2n+1)}(\varphi,\xi,\eta,g)$ is said to be a
paraSasakian manifold if in addition
$N_{\varphi}(X,Y)=2d\eta(X,Y)\xi$, where
$$N_{\varphi}(X,Y)=[\varphi X,\varphi Y]-\varphi [\varphi
X,Y]-\varphi [X,\varphi Y]+\varphi^2[X,Y].$$

An example of paracontact metric manifold take the hyperbolic
Heisenberg group {$\mathcal{H}^{2n+1} =\mathbb{R}^{2n}\times
\mathbb{R}$  \cite{S1}, where the group law is given by
\[
( p'',t'')\ =\ (p',t')\circ(p, t)\ =\ \bigl (p'\ +\ p, t' +\ t\ -
\sum_{k=1}^n(u'_kv_k-v'_ku_k)\bigr),
\]
\noindent for $p',p\in\mathbb R^{2n}$, $t', t\in \mathbb R$,
$p=(u_{1},v_{1},\dots,u_{n},v_{n})$ and
$p'=(u'_{1},v'_{1},\dots,u'_{n},v'_{n})$. Take $U_{k}=${\
}$\frac{\partial }{\partial u_{k}}+v_{k} \frac{\partial }{\partial
t},$ $V_{k}=\frac{\partial }{\partial v_{k}}-u_{k} \frac{\partial
}{\partial t},\xi =2\frac{\partial }{\partial t}$ as a basis of
left-invariant vector fields. Define $ \eta_{\mathcal{H}^{2n+1}}\
=\ \frac{1}{2}dt\
{+}\frac{1}{2}\sum_{k=1}^{n}(u_{k}dv_{k}-v_{k}du_{k})$, the tensor
field $\varphi$ of type $(1,1)$ by $\varphi U_{k}=V_{k},\varphi
V_{k}=U_{k}$,$\varphi \xi=0$ and the pseudo-Riemannian metric
$g=\eta_{\mathcal{H}^{2n+1}}\otimes
\eta_{\mathcal{H}^{2n+1}}+\sum_{k=1}^{n}((du_k)^2-(dv_k)^2)=d\eta_{\mathcal{H}^{2n+1}}(\cdot,\varphi\cdot)$.
Then the vector fields $\{U_{1},...U_{n},V_{1}...V_{n},\xi \}$
form an orthonormal basis for the paracontact metric structure,
$g(U_i,U_i)=-g(V_i,V_i)=1=g(\xi,\xi)$, $i=1,..,n$. It is easy to
check that $N_{\varphi}-2d\eta\otimes\xi=0$ and hence the
paracontact metric structure is a paraSasakian structure.

As another example \cite{S1}, take $\{x_{0},y_{0},\dots
,x_{n},y_{n}\}$ to be the standard coordinate system in $\mathbb
R^{2n+2}$. The standard para-K\"{a}hler structure $(\mathbb I,g)$
is defined by
$$\mathbb I\frac{\partial}{\partial x_i}=\frac{\partial}{\partial y_i},\quad \mathbb I\frac{\partial}{\partial y_i}=\frac{\partial}{\partial x_i},\quad g(\frac{\partial}{\partial x_i},\frac{\partial}{\partial x_j})=-g(\frac{\partial}{\partial y_i},\frac{\partial}{\partial y_j})=\delta_{ij},\quad g(\frac{\partial}{\partial x_i},\frac{\partial}{\partial y_j})=0,$$
where $i,j=0,...,n$. The Levi-Civita connection preserves the
fundamental 2-form defined by $g(\cdot,\mathbb I \cdot)$. Consider
the hypersurface
\begin{equation*}
\mathbf{H}^{2n+1}_{n+1}(1)= \{ (x_{0},y_{0},\dots ,x_{n},y_{n})\in
\mathbb R^{2n+2}\, |\ x_{0}^{2}+\dots +x_{n}^{2}-y_{0}^{2}\dots
-y_{n}^{2}=1 \}.
\end{equation*}

$\mathbf{H}^{2n+1}_{n+1}(1)$ carries a natural paracontact metric
structure inherited from the standard embedding in the
para-K\"{a}hler manifold $(\mathbb R^{2n+2},$ $\mathbb I,g)$. We
take
\begin{equation*}
\eta_{\mathbf{H}^{2n+1}_{n+1}(1)} =\sum_{j=0}^{n}\left(
x_{j}dy_{j}-y_{j}dx_{j}\right)
\end{equation*}
noting that here $N=\sum_{j=0}^{n}\left( x_{j}\frac{\partial
}{\partial x_{j} }+y_{j}\frac{\partial }{\partial y_{j}}\right)$
is the unit normal vector field and $\xi =\mathbb I
N=\sum_{j=0}^{n}\left( x_{j}\frac{\partial }{\partial
y_{j}}+y_{j}\frac{
\partial }{\partial x_{j}}\right) .$
For any vector field $X$ tangent to $\mathbf{H}^{2n+1}_{n+1}(1)$
we put $\varphi X=\mathbb I
X-\eta_{\mathbf{H}^{2n+1}_{n+1}(1)}(X)N$. Then we have
$\varphi^2X=X-\eta_{\mathbf{H}^{2n+1}_{n+1}(1)}(X)\xi$, $\varphi
\xi=0$, $\eta_{\mathbf{H}^{2n+1}_{n+1}(1)}(\varphi X)=0$ and
$\eta_{\mathbf{H}^{2n+1}_{n+1}(1)}(X)=g(\xi,X)$. The hyperboloid
$\mathbf{H}^{2n+1}_{n+1}(1)$ is a totally umbilical hypersurface
with the second fundamental form equal to minus identity, so we
obtain a paraSasakian structure,
$d\eta_{\mathbf{H}^{2n+1}_{n+1}(1)}(X,Y)=g(X,\varphi Y)$.

The above construction does not work for hyperboloids
$\mathbf{H}^{2n+1}_{n+1}(r)$ for $r$ different from 1. In fact,
for such hyperboloids we shall obtain a paracontact metric
structure, but $d\eta_{\mathbf{H}^{2n+1}_{n+1}(r)}$ would be equal
to $\frac{1}{r}g(\cdot,\varphi\cdot)$. However this flaw could be
rectified by a $\mathcal{D}$-homothetic transformation (see
\ref{d1} below).

The main purpose of this article is to prove the following
\begin{thm}\label{t20}
Let $M(\varphi,\xi,\eta,g)$ be a $(2n+1)$-dimensional $(n>1)$
connected, simply connected paraSasakian manifold with constant
paraholomorphic sectional curvature $k$.
\begin{enumerate}
\item[i).] If $k=3$, then $M$ is locally isomorphic to the
hyperbolic Heisenberg group $\mathcal{H}^{2n+1}$;

\item[ii).] If $k\not=3$, then $M$ is locally isomorphic to the
hyperboloid $\mathbf{H}(k)$, obtained from
$\mathbf{H}^{2n+1}_{n+1}(1)$ through $\mathcal{D}$-homothetic
transformation, for which
$\eta_{\mathbf{H}(k)}=-\dfrac{4}{k-3}\eta_{\mathbf{H}^{2n+1}_{n+1}(1)}$.
\end{enumerate}
\end{thm}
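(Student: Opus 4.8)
The plan follows the paracontact analogue of Tanno's classification of Sasakian space forms.

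\textbf{Step 1 (the curvature tensor).} First I would show that on a $(2n+1)$-dimensional paraSasakian manifold of constant paraholomorphic sectional curvature $k$ the Riemann tensor is forced to be a structure-universal expression, affine in $k$, of the form
\[
R(X,Y)Z=\lambda(k)\bigl(g(Y,Z)X-g(X,Z)Y\bigr)+\mu(k)\,S(X,Y)Z,
\]
where $S(X,Y)Z$ is the universal combination of $\eta$- and $\varphi$-terms (the para counterpart of the one in the Sasakian space-form formula) and $\lambda,\mu$ are affine functions of $k$ with $\lambda$ vanishing \emph{exactly} at $k=3$. This is the standard polarization argument: impose that the paraholomorphic sectional curvature of every paraholomorphic plane equal $k$, polarize in $X\in\ker\eta$, and combine with the first Bianchi identity and the paraSasakian curvature identities (themselves coming from $\nabla_X\xi=-\varphi X$ and $(\nabla_X\varphi)Y=\eta(Y)X-g(X,Y)\xi$, which already determine $R(X,Y)\xi$ and $R(\xi,X)Y$). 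The relation $\lambda(3)=0$ is precisely what produces the case split. The hypothesis $n>1$ is essential here: for $n=1$ the distribution $\ker\eta$ is spanned by $X$ and $\varphi X$ alone, so ``constant paraholomorphic sectional curvature'' imposes nothing on $R$.

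\textbf{Step 2 (rigidity).} Next I would prove that two $(2n+1)$-dimensional paraSasakian manifolds with the same constant paraholomorphic sectional curvature $k$ are locally isomorphic as paraSasakian manifolds. The structure tensors satisfy a closed first-order system whose curvature is, by Step 1, a universal algebraic function of the structure and of $k$; since $\nabla g=0$ and $\nabla\varphi,\nabla\xi$ are likewise universal, every iterated covariant derivative $\nabla^{m}R$ is again such a function. Hence, fixing $p\in M$, $p'\in M'$ and a linear isomorphism $T_pM\to T_{p'}M'$ carrying $(g_p,\varphi_p,\xi_p,\eta_p)$ to $(g_{p'},\varphi_{p'},\xi_{p'},\eta_{p'})$ — which exists because these are the standard models of paracontact linear algebra, realized on any $\varphi$-adapted orthonormal frame with sign pattern $g(U_i,U_i)=-g(V_i,V_i)=1=g(\xi,\xi)$ — all the tensors $\nabla^{m}R$ and $\nabla^{m}R'$ correspond under it. A Cartan--Ambrose--Hicks type theorem, in the real-analytic category (paraSasakian space forms being real-analytic), then produces a local isometry, which one checks to intertwine $\varphi,\xi,\eta$ by integrating the structure equations along the geodesics through $p$. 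So the germ of such a manifold is determined by $k$, and for $M$ connected and simply connected the resulting local isomorphisms patch consistently.

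\textbf{Step 3 (the two models).} By Step 2 it then suffices to exhibit, for each $k$, one paraSasakian manifold of constant paraholomorphic sectional curvature $k$. For $k=3$: on the hyperbolic Heisenberg group $\mathcal{H}^{2n+1}$ the left-invariant orthonormal frame $\{U_1,\dots,U_n,V_1,\dots,V_n,\xi\}$ has only $[U_i,V_j]$ nonzero (a multiple of $\xi$), so the Koszul formula yields $\nabla$, then $R$, and one checks this coincides with the $k=3$ instance of Step 1 — the $\lambda(k)$-term dropping out — whence $\mathcal{H}^{2n+1}$ has constant paraholomorphic sectional curvature $3$. For $k\neq3$: start from $\mathbf{H}^{2n+1}_{n+1}(1)$, which (as recalled in the introduction) is a totally umbilic hypersurface with shape operator $-\mathrm{id}$ inside the flat para-K\"{a}hler space $(\mathbb{R}^{2n+2},\mathbb{I},g)$; the Gauss equation gives its curvature, and evaluating $R(X,\varphi X,\varphi X,X)$ on $\ker\eta$ shows it has constant paraholomorphic sectional curvature $-1$. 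A $\mathcal{D}$-homothetic transformation (see \ref{d1}) with parameter $a$ replaces $\eta$ by $a\eta$ and, by a direct computation, turns constant paraholomorphic sectional curvature $c$ into $\frac{c-3}{a}+3$; starting from $c=-1$ this equals $k$ precisely when $a=-\frac{4}{k-3}$, i.e.\ when $\eta_{\mathbf{H}(k)}=-\frac{4}{k-3}\,\eta_{\mathbf{H}^{2n+1}_{n+1}(1)}$. As $k\to3$ this parameter runs off to infinity and the structure degenerates onto the Heisenberg one, which is exactly the dichotomy in the statement.

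\textbf{Main obstacle.} The substantive point is Step 2 — that constant paraholomorphic sectional curvature rigidly fixes the local paraSasakian structure. Over $\RR$ (the Sasakian case, due to Tanno) this is classical, but the indefinite signature here requires care: one must work consistently with $\varphi$-adapted frames respecting $g(U_i,U_i)=-g(V_i,V_i)$, and the Cartan-type development along geodesics must be confined to a neighbourhood on which geodesics behave well despite the indefinite metric — which is why the conclusion is only a local isomorphism. By comparison, Step 1 is a long but mechanical polarization computation, and Step 3 is a direct curvature computation on each of the two explicit models.
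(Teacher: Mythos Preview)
Your proposal is correct and follows the same overall strategy as the paper: derive the universal curvature formula (the paper's Theorem~\ref{t11}), establish local rigidity by a Kobayashi--Nomizu/Cartan--Ambrose--Hicks argument on $\varphi$-adapted frames, and realize the models via $\mathcal{D}$-homothetic deformation of $\mathbf{H}^{2n+1}_{n+1}(1)$ (the paper's Theorem~\ref{t13} and the paragraph after it). The one technical difference worth flagging is the connection used in the rigidity step. You work with the Levi-Civita connection $\nabla$, arguing that every $\nabla^{m}R$ is a universal algebraic expression in $(g,\varphi,\xi,\eta)$ because $\nabla\varphi$ and $\nabla\xi$ already are. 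The paper instead passes to the canonical paracontact connection $\widetilde{\nabla}$, for which $g,\varphi,\xi,\eta$ are all \emph{parallel} on a paraSasakian manifold (equation~(\ref{tnweb1})); then $\widetilde{R}$ is itself built from parallel tensors (equation~(\ref{f22})), so $\widetilde{\nabla}T=0$ and $\widetilde{\nabla}\widetilde{R}=0$ hold outright and the affine rigidity theorem for connections with torsion (Kobayashi--Nomizu, Vol.~I, Theorem~7.2) applies without computing any higher derivatives. This also streamlines the case $k=3$: there $\widetilde{R}\equiv 0$, and the paper simply invokes the local flatness theorem of \cite{S1} to land on the hyperbolic Heisenberg group, rather than verifying the Heisenberg curvature by hand as you propose. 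Both routes are valid; the paper's is shorter and makes the dichotomy $k=3$ versus $k\neq 3$ transparent (flat versus non-flat $\widetilde{\nabla}$), while yours avoids introducing an auxiliary connection.
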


\textbf{Acknowledgement} I would like to thank Stefan Ivanov for
his useful comments. This work was partially supported by Contract
082/2009 with the University of Sofia "St. Kl. Ohridski". The
author also acknowledges support from the European Operational
program HRD through contract BGO051PO001/07/3.3-02/53 with the
Bulgarian Ministry of Education.

\section{Paracontact metric manifolds}
A smooth pseudo-Riemannian manifold $M^{(2n+1)}$ is said to be a
\emph{paracontact metric manifold} if it is equipped with a tensor
field $\varphi$ of type $(1,1)$, a vector field $\xi$, a 1-form
$\eta$ and pseudo-Riemannian metric $g$ satisfying the following
compatibility conditions
\begin{eqnarray}
  \label{f82}
    & &
    \begin{array}{cl}
          (i)   & \varphi(\xi)=0,\quad \eta \circ \varphi=0,\quad
          \\[5pt]
          (ii)  & \eta (\xi)=1 \quad \varphi^2 = id - \eta \otimes
          \xi.
             \\[5pt]
          (iii) & g(\varphi X,\varphi Y)=-g(X,Y)+\eta (X)\eta (Y)
          \\[3pt]
           (iv) & g(X,\varphi Y)=d\eta(X,Y),
    \end{array}
\end{eqnarray}
where $d\eta(X,Y)=\frac12(X\eta(Y)-Y\eta(X)-\eta([X,Y])$.

Let $\mathbb D=Ker~\eta$ be the  horizontal distribution generated
by $\eta$. The tensor field $\varphi$ induces an almost
paracomplex structure on each fibre on $\mathbb D$.

Recall that an almost paracomplex structure on a 2n-dimensional
manifold is a (1,1)-tensor $J$ such that $J^2=1$ and the
eigensubbundles $T^+,T^-$, corresponding to the eigenvalues $1,-1$
of $J$ respectively, have equal dimension $n$.

The Nijenhuis tensor $N_J$ of $J$, given by
$N_{J}(X,Y)=[JX,JY]-J[JX,Y]-J[X,JY]+[X,Y],$ is the obstruction for
the integrability of the eigensubbundles $T^+,T^-$. If $N_J=0$
then the almost paracomplex structure is called paracomplex or
integrable.

We recall that the paracontact metric manifold
$M^{(2n+1)}(\varphi,\xi,\eta,g)$ is said to be a
$\emph{paraSasakian manifold}$, if
$N_{\varphi}(X,Y)=2d\eta(X,Y)\xi$.

\begin{rem}\cite{Z1}\label{r1}
A paracontact metric manifold $M^{2n+1}(\varphi,\xi,\eta,g)$ is
paraSasakian if and only if
\begin{equation}\label{f6}
(\nabla_X\varphi)Y=-g(X,Y)\xi+\eta (Y)X,
\end{equation}
where $\nabla$ denotes the covariant differentiation with respect
to the Levi-Civita connection determined by $g$.
\end{rem}
%If a manifold $M^{(2n+1)}$ with $(\varphi,\xi,\eta)$-structure
%admits a pseudo-Riemannian metric $g$ such that
%\begin{equation}\label{con}
%g(\varphi X,\varphi Y)=-g(X,Y)+\eta (X)\eta (Y),
%\end{equation}
%then we say that $M^{(2n+1)}$ has a paracontact metric structure
%and $g$ is called \emph{compatible} metric. Note that any
%compatible metric $g$ with a given paracontact structure is
%necessarily of signature $(n+1,n)$.

%Setting $Y=\xi$, we have $\eta(X)=g(X,\xi).$
%\begin{defn}
%If $g(X,\varphi Y)=d\eta(X,Y)$ (where
%$d\eta(X,Y)=\frac12(X\eta(Y)-Y\eta(X)-\eta([X,Y])$, we say that
%$\eta$ is a paracontact form and that the paracontact manifold
%$(M,\varphi,\eta,g)$ is $\emph{paracontact metric manifold}$.
%\end{defn}
Since $\mathbb D$ is orientable by the paracomplex structure
$\varphi$, the manifold $M$ is orientable exactly when the
canonical line bundle $E=\{\eta\in \Lambda^1: Ker~\eta=\mathbb
D\}$ is orientable. Any two contact forms $\bar\eta,\eta\in E$ are
connected by
\begin{equation}\label{gauge}
\overline{\eta}=\alpha\eta,
\end{equation}
for some non-vanishing positive smooth function $\alpha$ on $M$.

Let $M^{2n+1}(\varphi,\xi,\eta,g)$ be a paracontact metric
manifold. We consider a paracontact form $\overline{\eta}=\alpha
\eta$ and define the structure tensors
$(\overline{\varphi},\overline{\xi},\overline{g})$ corresponding
to $\overline{\eta}$ using the condition:

$(\star)$ For each point $x$ of $M$, the action of $\varphi$ and
$\overline{\varphi}$ are identical  on $\mathbb{D}_x$

Under condition $(\star)$ the transformation $\eta \rightarrow
\overline{\eta}=\alpha \eta$ induces a transformation of the
structure tensors by
\begin{eqnarray}
\label{f60}
   & &
\begin{array}{cl}
     (i) & \overline{\xi}=\frac{1}{\alpha}(\xi+\zeta),\quad \zeta
         =-\frac{1}{2\alpha}\varphi X(\alpha),\\
    \\[5pt]
    (ii) & g(X,\overline{\varphi}Y)=g(X,\varphi Y)+\frac{1}{2\alpha}(X(\alpha)-\xi(\alpha) \cdot \eta(X))\eta(Y),\\%$$
     \\[5pt]
   (iii) & \overline{g}(X,Y)=\alpha(g(X,Y)-\eta(X)Y(\zeta)-\eta(Y)X(\zeta))+
\alpha(\alpha-1+|\zeta|^2)\eta(X)\eta(Y).
\end{array}
\end{eqnarray}

We call the transformation of the structure tensors given by
$(\ref{f60})$ a \emph{gauge transformation of paracontact
pseudo-Riemannian structure} (or paracontact conformal
transformation)\cite{Z1}. When $\alpha$ is constant the
transformation is said to be $\mathcal{D}$-homothetic. More
explicitly a $\mathcal{D}$-homothetic transformation is defined by
\begin{equation}\label{d1}
\overline{\varphi}=\varphi,\quad
\overline{\xi}=\frac{1}{\alpha}\xi, \quad \overline{\eta}=\alpha
\eta,\quad \overline{g}=\alpha g +(\alpha^2-\alpha)\eta \otimes
\eta,\quad \alpha=const>0.
\end{equation}

$Theorem~4.8$ in \cite{Z1} shows that the $\mathcal{D}$-homothetic
transformations preserve the paraSasakian structure.

\begin{defn}\cite{Z1}
A paracontact structure $(\eta,\varphi,\xi)$ is said to be
\emph{integrable} if the almost paracomplex structure
$\varphi_{|_{\mathbb D}}$ satisfies the conditions
\begin{equation}\label{new1}
N_{\varphi}(X,Y)=0, \quad [\varphi X,Y]+[X,\varphi Y] \in
\Gamma(\mathbb D)\quad X,Y\in \Gamma(\mathbb D).
\end{equation}
\end{defn}
\begin{defn}\cite{S1}
A paracontact manifold with an integrable paracontact structure is
called a $\emph{para CR-manifold}$.
\end{defn}

In order to prove the main theorem we shall need the notions of
the canonical paracontact connection $\widetilde{\nabla}$ and of
the tensor field $h$ defined in \cite{Z1}. Let $M$ be a
paracontact metric manifold. We define
\begin{equation}
h=\frac{1}{2}\pounds_{\xi}\varphi
\end{equation}
\begin{equation}\label{tan-web}
\begin{aligned}
\widetilde{\nabla}_XY=\nabla_XY+\eta(X)\varphi Y+\eta(Y)(\varphi
X-\varphi hX)+
\end{aligned}
\end{equation}
$$+g(X,\varphi Y)\cdot \xi-g(hX,\varphi Y)\cdot \xi,$$
where $X,Y\in \Gamma(TM)$.

The tensor field $h$ has the properties:
\begin{equation}\label{f51}
\nabla_X\xi=- \varphi X+\varphi hX,\quad \varphi h+h\varphi,\quad
trh=h\xi =0,\quad X\in \Gamma(TM).
\end{equation}

The torsion of the connection $\widetilde{\nabla}$ is given by
\begin{gather}\label{tprtw}
T(X,Y)=\eta(X)\varphi hY-\eta(Y)\varphi hX+2g(X,\varphi
Y)\xi,\quad X,Y\in \Gamma(TM).
\end{gather}

On a paracontact metric manifold the connection
$\widetilde{\nabla}$ has the properties
\begin{equation}\label{tnweb}
\begin{aligned}
\widetilde{\nabla}\eta=0,\quad \widetilde{\nabla}\xi=0,\quad \widetilde{\nabla}g=0,\\
(\widetilde{\nabla}_X\varphi)Y=(\nabla_X\varphi)Y+g(X-hX,Y)\xi-\eta(Y)(X-hX),\\
T(\xi,\varphi Y)=-\varphi T(\xi,Y),\quad Y \in \Gamma(\mathbb{D})\quad or \quad Y\in \Gamma(TM)\\
T(X,Y)=2d\eta(X,Y)\xi=2g(X,\varphi Y),\quad X,Y\in
\Gamma(\mathbb{D}),
\end{aligned}
\end{equation}
where $T(\xi,Y)=h(Y)$ is the horizontal part of the torsion and
$T(X,Y)=2d\eta(X,Y)\xi$ is the vertical part of the torsion.

For example, the hyperbolic Heisenberg group is flat with respect
to the canonical paracontact connection.

The curvature tensor
$\widetilde{R}=[\widetilde{\nabla},\widetilde{\nabla}]-\widetilde{\nabla}_{[,]}$
of $\widetilde{\nabla}$ is related to the curvature
$R=[\nabla,\nabla]-\nabla_{[,]}$  of the Levi-Civita connection by
\begin{equation}\label{f21}
\widetilde{R}_{ijk}^l=R_{ijk}^l+\nabla_i\varphi_k^l\eta_j-\nabla_j\varphi_k^l\eta_i+2\varphi_{ij}\varphi_k^l-
\varphi_s^l\nabla_j\xi^s\eta_i\eta_k+\varphi_s^l\nabla_i\xi^s\eta_j\eta_k+
\end{equation}
$$+\xi^l\nabla_i\eta_s\varphi_k^s\eta_j-\xi^l\nabla_j\eta_s\varphi_k^s\eta_i-\xi^lR_{ijk}^s\eta_s-
\eta_kR_{ijs}^l\xi^s+\nabla_j\eta_k\nabla_i\xi^l-\nabla_i\eta_k\nabla_j\xi^l.$$

Consequently, for the Ricci tensors $\widetilde{\mathbf{r}}$ of
$\widetilde{\nabla}$ and $\mathbf{r}$ of $\nabla$ we obtain
\begin{equation}\label{f50}
\widetilde{\mathbf{r}}_{jk}=\mathbf{r}_{jk}
-2g_{jk}+2\eta_j\eta_k-\mathbf{r}_{js}\eta_k\xi^s-R_{jsrk}\xi^s\xi^r-
\nabla_r\eta_k\nabla_j\xi^r.
\end{equation}

The canonical paracontact connection $\widetilde{\nabla}$
preserves the structural tensors $g$, $\xi$ and $\eta$. However it
does not preserve the tensor $\varphi$. More precisely we have
(Theorem~4.10 \cite{Z1}):
$$\widetilde{\nabla}\varphi=0\iff M \hbox{ is para CR-manifold.}$$

Let us require $M$ to be a paraSasakian manifold. According to
\cite{Z1}, we have $h=0$. Now, from $Remark~\ref{r1}$, $h=0$ and
the equalities $(\ref{tnweb})$, we obtain
\begin{equation}\label{tnweb1}
\begin{aligned}
\widetilde{\nabla}\eta=0,\quad \widetilde{\nabla}\xi=0,\quad
\widetilde{\nabla}g=0,\quad \widetilde{\nabla}\varphi=0.
\end{aligned}
\end{equation}
Hence, $M$ is a para CR-manifold.

$Theorem~4.11$ in \cite{Z1} shows that the horizontal part
($T(\xi,Y)=h(Y)$ (for $Y \in \Gamma(\mathbb{D})$) of the torsion
vanishes if and only if $M$ is a paraSasakian manifold.

\section{Identities}

Let us fix local coordinates $(x^1,\dots,x^{2n+1})$. We shall use
the Einstein summation convention. In local coordinates the
equations $(\ref{f82})$ take the form
$$\eta_r\xi^r=1,\qquad \varphi_r^i\xi^r=0,\qquad \eta_r\varphi^r_j=0,\qquad
\varphi^i_r\varphi^r_j=\delta^i_j-\xi^i\eta_j,$$
$$g_{rs}\varphi^r_j\varphi^s_k=-g_{jk}+\eta_j\eta_k,\qquad g_{jr}\xi^r=\eta_j.$$

Let $M^{(2n+1)}(\varphi,\xi,\eta,g)$ be a paraSasakian manifold.
Then we have the following identities (\cite{Z1})
\begin{equation}\label{f1}
\nabla_i\eta_j=\varphi_{ij}\quad \Longleftrightarrow \quad
\nabla_i\xi^j=-\varphi^j_i
\end{equation}
\begin{equation}\label{f2}
\nabla_r\varphi_{si}=\eta_ig_{rs}-\eta_sg_{ri}\quad
\Longleftrightarrow \quad
\nabla_r\varphi^s_i=\eta_i\delta^s_r-\xi^sg_{ri},
\end{equation}
where $\varphi_{ij}=-\varphi_{ji}=g_{il}\varphi^l_j$. In this
case, the vector field $\xi$ is a unit Killing vector field.

From $(\ref{f1})$  and $(\ref{f2})$, we have
\begin{equation}\label{f3}
\nabla_k\nabla_i\eta_j=g_{ik}\xi^j-\eta_ig_{kj}\quad or \quad
\nabla_k\nabla_i\xi^j=g_{ik}\xi^j-\eta_i\delta^j_k.
\end{equation}
Using the Ricci formula, we get
\begin{equation}\label{f4}
R_{kisl}\xi^l=g_{ks}\eta_i-g_{is}\eta_k\quad or \quad
R^l_{kis}\eta_l=g_{ks}\eta_i-g_{is}\eta_k.
\end{equation}
From $(\ref{f2})$ and the Ricci identities, we obtain
\begin{equation}\label{f5}
\varphi^a_j\varphi^b_iR_{ablk}=-R_{jilk}-\varphi_{ik}\varphi_{lj}+\varphi_{il}\varphi_{kj}-g_{kj}g_{il}+g_{lj}g_{ik}
\end{equation}
Transvecting $(\ref{f5})$ with $\varphi^j_m\varphi^l_h$, we have
\begin{equation}\label{f6}
\varphi^b_m\varphi^l_hR_{bilk}-\varphi^b_i\varphi^l_hR_{bmlk}=g_{km}g_{ih}-g_{mh}g_{ik}+g_{ik}\eta_m\eta_h-
\end{equation}
$$-g_{mk}\eta_i\eta_h+\varphi_{hm}\varphi_{ik}-\varphi_{km}\varphi_{ih}.$$
From
$$\varphi^b_i\varphi^l_hR_{bmlk}=-\varphi^b_i\varphi^l_h(R_{mlbk}+R_{lbmk})=\varphi^b_i\varphi^l_hR_{blmk}-\varphi^b_i\varphi^l_hR_{mlbk}$$
and $(\ref{f5})$, we get
\begin{equation}\label{f7}
\varphi^b_i\varphi^l_hR_{bmlk}-\varphi^b_h\varphi^l_iR_{bmlk}=-R_{ihmk}-g_{ki}g_{mh}+g_{mi}g_{hk}+\varphi_{hm}\varphi_{ki}-\varphi_{hk}\varphi_{mi}.
\end{equation}

\section{Paraholomorphic sectional curvature}\label{gau}

Let $M^{2n+1}(\varphi,\xi,\eta,g)$ be a paraSasakian manifold. Let
$p$ be any point on $M$. We consider $\mathbb D$ as a subspace of
$T_p(M)$, the tangent space of $M$ at $p$, whose elements are
orthogonal to $\xi$, i.e.,
$$\mathbb {D}=Ker~\eta=\{\upsilon \in T_p(M)| g(\xi,\upsilon)=0\}.$$

Let $u$ be any unit vector of $\mathbb D$, i.e.,
$g(u,u)=\varepsilon_{u}$, where $\varepsilon_{u}=\pm 1$. By
$\xi-$section, we mean the section determined by $\xi$ and $u$.

The sectional curvature determined by the $\xi-$section is said to
be \emph{$\xi$-sectional curvature}. Denoting it by $k(\xi,u)$, we
get
$$k(\xi,u)=\varepsilon_{u}\varepsilon_{\xi}R_{kjhi}u^k\xi^j\xi^hu^i=\frac{R_{kjhi}u^k\xi^j\xi^hu^i}{g_{ki}u^ku^ig_{jh}\xi^j\xi^h}.$$
Using $(\ref{f3})$ and $\varepsilon_{\xi}=1$, we get
$$k(\xi,u)=-\varepsilon_{u}(g_{ki}\eta_j-g_{ij}\eta_k)u^k\xi^ju^i.$$
By assumption,
$$g_{ij}\xi^iu^j=0\quad and \quad g_{ik}u^iu^k=\varepsilon_{u},$$
and hence we have
$$k(\xi,u)=-1.$$
Thus we obtain the following
\begin{pro}
In a paraSasakian manifold the $\xi-$sectional curvature is always
equal to -1.
\end{pro}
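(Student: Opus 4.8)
The plan is to compute $k(\xi,u)$ directly from the curvature identity $(\ref{f4})$, using only the algebraic relations $\eta_r\xi^r=1$ and $g_{jr}\xi^r=\eta_j$ together with the hypothesis $u\in\mathbb D$. First I would observe that $\xi$ has unit length: contracting $g_{jr}\xi^r=\eta_j$ with $\xi^j$ gives $g(\xi,\xi)=\eta_j\xi^j=1$, so $\varepsilon_\xi=+1$. Next, for a unit $u\in\mathbb D$ one has $g(u,\xi)=g_{jr}u^j\xi^r=\eta_ju^j=\eta(u)=0$, so the $2$-plane spanned by $\xi$ and $u$ is nondegenerate with Gram determinant $g(u,u)g(\xi,\xi)-g(u,\xi)^2=\varepsilon_u$. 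Hence the $\xi$-sectional curvature is
$$k(\xi,u)=\frac{R_{kjhi}u^k\xi^j\xi^hu^i}{\varepsilon_u}=\varepsilon_u\,R_{kjhi}u^k\xi^j\xi^hu^i,$$
and it remains only to evaluate the numerator.

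For the numerator I would invoke $(\ref{f4})$, namely $R_{kisl}\xi^l=g_{ks}\eta_i-g_{is}\eta_k$, which is obtained from $(\ref{f1})$--$(\ref{f2})$ by a second covariant differentiation (yielding $(\ref{f3})$) followed by the Ricci identity, and is therefore available here. Using the antisymmetry of $R$ in its last pair of indices and the pair symmetry $R_{kjhi}=R_{hikj}$, I would bring $R_{kjhi}\xi^h$ into the shape appearing in $(\ref{f4})$ and thereby obtain $R_{kjhi}\xi^h=-g_{ki}\eta_j+g_{ji}\eta_k$.

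Substituting this expression and contracting with $u^k\xi^ju^i$, the term involving $g_{ji}\eta_k$ vanishes because $\eta_ku^k=\eta(u)=0$, while $g_{ki}\eta_ju^k\xi^ju^i=(g_{ki}u^ku^i)(\eta_j\xi^j)=\varepsilon_u$; hence $R_{kjhi}u^k\xi^j\xi^hu^i=-\varepsilon_u$, and therefore $k(\xi,u)=\varepsilon_u(-\varepsilon_u)=-\varepsilon_u^{\,2}=-1$, which is the assertion. The computation is short; the only point requiring care is the sign bookkeeping in the pseudo-Riemannian sectional-curvature formula and the placement of indices when one appeals to $(\ref{f4})$, so I do not expect any genuine obstacle.
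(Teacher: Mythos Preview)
Your proof is correct and follows essentially the same route as the paper: both compute $R_{kjhi}u^k\xi^j\xi^hu^i$ via the curvature identity $R_{kisl}\xi^l=g_{ks}\eta_i-g_{is}\eta_k$ (the paper cites $(\ref{f3})$, from which $(\ref{f4})$ follows by the Ricci identity), then use $\eta(u)=0$, $\eta(\xi)=1$ and $g(u,u)=\varepsilon_u$ to reduce to $-\varepsilon_u^{\,2}=-1$. Your version is slightly more explicit about $\varepsilon_\xi=1$ and the index juggling, but there is no substantive difference.
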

Next we consider the sectional curvature determined by two
orthogonal vectors in $\mathbb D$.

If $\upsilon$ is any unit vector (not necessarily in $\mathbb D$)
such that $\varphi \upsilon$ is not isotropic, then $\varphi
\upsilon$ lies on $\mathbb D$. Furthermore any element of $\mathbb
D$ can be written in this form.

If the section is determined by $\varphi \upsilon$ and
$\varphi^2\upsilon$, which are orthogonal to each other, we call
the section \emph{paraholomorphic}, and the sectional curvature
determined by it --- \emph{paraholomorphic sectional curvature}.
In such case we have
\begin{equation}\label{f8}
k=\frac{R_{dcab}(\varphi
\upsilon)^d(\varphi^2\upsilon)^c(\varphi^2\upsilon)^a(\varphi
\upsilon)^b}{g_{dc}(\varphi \upsilon)^d(\varphi
\upsilon)^cg_{ba}(\varphi^2\upsilon)^b(\varphi^2\upsilon)^a}
\end{equation}
\begin{thm}\label{t11}
Let $M^{2n+1}(\varphi,\xi,\eta,g)$ be a paraSasakian manifold of
dimension $2n+1$, $n>1$. If the paraholomorphic sectional
curvature is independent on the paraholomorphic section at a
point, then the curvature tensor has the form
\begin{equation}\label{f20}
R_{mjhl}=\frac{k-3}{4}(g_{ml}g_{hj}-g_{jl}g_{hm})+\frac{k+1}{4}(g_{hm}\eta_j\eta_l+g_{lj}\eta_m\eta_h-g_{lm}\eta_j\eta_h-
\end{equation}
$$-g_{hj}\eta_l\eta_m+\varphi_{hj}\varphi_{ml}-\varphi_{hm}\varphi_{jl}+2\varphi_{mj}\varphi_{hl}),$$
where $k$ is a constant.
\end{thm}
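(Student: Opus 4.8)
The plan is to run the polarization argument that produces the classical complex and Sasakian space forms, adapted to the split signature here. Write $A_{mjhl}$ for the tensor on the right of $(\ref{f20})$ and set $P_{mjhl}:=R_{mjhl}-A_{mjhl}$; the goal is $P=0$ at the point in question. A direct contraction with $\xi^{l}$ --- using $\varphi^{s}_{l}\xi^{l}=0$, $\eta_{l}\xi^{l}=1$, $g_{lj}\xi^{l}=\eta_{j}$ --- gives $A_{mjhl}\xi^{l}=g_{mh}\eta_{j}-g_{jh}\eta_{m}$, which is precisely the value of $R_{mjhl}\xi^{l}$ imposed by $(\ref{f4})$. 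One also checks that $A$ obeys all the algebraic symmetries of a curvature tensor (the first Bianchi identity for the $\varphi$-quadratic and $\eta$-quadratic parts follows from the skew-symmetry of $\varphi_{ij}$ and from direct inspection). Hence $P$ is an algebraic curvature tensor with $P_{mjhl}\xi^{l}=0$, so by skew-symmetry in each pair and the pair-interchange symmetry $P$ vanishes whenever any of its arguments is proportional to $\xi$; writing $T_{p}M=\mathbb{D}\oplus\mathbb{R}\xi$, it remains to prove $P=0$ on $\mathbb{D}$, where $\eta$ drops out, $\varphi^{2}=\mathrm{id}$ and $g(\varphi X,\varphi Y)=-g(X,Y)$.

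Next I would rewrite the hypothesis on $\mathbb{D}$. For $X\in\mathbb{D}$ with $g(X,X)\neq 0$ put $\upsilon=\varphi X$ in $(\ref{f8})$: then $\varphi\upsilon=X$, $\varphi^{2}\upsilon=\varphi X$, the denominator equals $-g(X,X)^{2}$, and the constancy assumption reads $R(X,\varphi X,\varphi X,X)=-k\,g(X,X)^{2}$, first for non-isotropic $X$, hence (both sides being polynomial in $X$) for all $X\in\mathbb{D}$. Contracting the right side of $(\ref{f20})$ against $X^{m}(\varphi X)^{j}(\varphi X)^{h}X^{l}$ and using $\varphi^{2}=\mathrm{id}$ on $\mathbb{D}$ shows that $A$ satisfies the same relation, so
\[
P(X,\varphi X,\varphi X,X)=0\qquad\text{for all }X\in\mathbb{D}.
\]

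The algebraic finish rests on $(\ref{f5})$, which expresses the $\varphi\varphi$-transvection of $R$ in its first two slots as $-R$ plus correction terms built only from $g$ and $\varphi$; one verifies that $A$ satisfies $(\ref{f5})$, and hence its transvected consequences $(\ref{f6})$, $(\ref{f7})$, with the \emph{same} correction terms, so $P$ satisfies the clean ``para-K\"{a}hler'' identities $P(\varphi X,\varphi Y,Z,W)=-P(X,Y,Z,W)$ and $P(Z,W,\varphi X,\varphi Y)=-P(Z,W,X,Y)$ on $\mathbb{D}$. Split $\mathbb{D}=\mathbb{D}^{+}\oplus\mathbb{D}^{-}$ into the $(\pm1)$-eigenspaces of $\varphi$; since $\varphi=\pm\mathrm{id}$ there, the first identity forces $P$ to vanish whenever its first two (or, by pair-interchange, its last two) arguments lie in the same eigenspace. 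Hence the only components to control are $P(X_{+},X_{-},Y_{+},Y_{-})$ with $X_{\pm},Y_{\pm}\in\mathbb{D}^{\pm}$. Writing $X=X_{+}+X_{-}$ in $P(X,\varphi X,\varphi X,X)=0$ kills the eigenspace-pure and three-to-one terms and leaves $P(X_{+},X_{-},X_{-},X_{+})=0$; polarizing this in $X_{+}$ and then in $X_{-}$, and reconciling the outcomes with the pair-interchange symmetry and the first Bianchi identity (whose third term drops out, having its first two arguments in $\mathbb{D}^{+}$), forces $P(X_{+},X_{-},Y_{+},Y_{-})=0$ as well. Thus $P\equiv 0$ on $\mathbb{D}$, and with the first paragraph $R=A$, which is $(\ref{f20})$.

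The step that demands real care is the claim that $A$ reproduces, exactly, the $g$- and $\varphi$-correction terms of $(\ref{f5})$--$(\ref{f7})$ and not merely the number $k$: this is what allows $P$ to inherit the clean para-K\"{a}hler symmetry, and it must be confirmed by a lengthy but mechanical computation with the structure tensors (together with the contraction identities of Section~3). Granting this, the concluding polarization is routine bookkeeping. The assumption $n>1$ is in force throughout as in the main theorem; for $n=1$ the distribution $\mathbb{D}$ is two-dimensional and constitutes the only paraholomorphic section, so the constancy hypothesis is automatically satisfied.
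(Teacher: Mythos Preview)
Your argument for the pointwise form $(\ref{f20})$ is a clean alternative to the paper's direct symmetrization: instead of manipulating $(\ref{f5})$--$(\ref{f7})$ to isolate $R$ as the paper does, you subtract the model tensor $A$, pass the para-K\"{a}hler identity $P(\varphi X,\varphi Y,Z,W)=-P(X,Y,Z,W)$ to the difference $P=R-A$, and finish by an eigenspace polarization on $\mathbb{D}=\mathbb{D}^{+}\oplus\mathbb{D}^{-}$. That part is sound (modulo the acknowledged mechanical check that $A$ satisfies $(\ref{f5})$ with the same correction terms, which does go through; indeed one can argue this without computation by noting that $A$ equals $R$ on the model space $\mathbf{H}(k)$, where $(\ref{f5})$ holds, and the identity is purely algebraic in the structure tensors at a point).

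However, you have not proved the theorem as stated: it asserts not only the form $(\ref{f20})$ at each point but also that $k$ is \emph{constant} on $M$. Your proposal is purely pointwise (``the goal is $P=0$ at the point in question'') and never addresses global constancy. This is precisely where the hypothesis $n>1$ enters --- not, as your last paragraph suggests, merely to avoid the tautological $n=1$ case. The paper contracts $(\ref{f20})$ to obtain the Ricci tensor $(\ref{f12})$ and scalar curvature $(\ref{f13})$, then feeds these into the contracted second Bianchi identity to get $(n-1)\nabla_{i}\mathbf{s}+\eta_{i}\,\xi^{j}\nabla_{j}\mathbf{s}=0$; transvecting with $\xi^{i}$ yields $\xi^{j}\nabla_{j}\mathbf{s}=0$, hence $(n-1)\nabla_{i}\mathbf{s}=0$, and so $\mathbf{s}$ (equivalently $k$) is constant exactly because $n>1$. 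Without this Schur-type step your proof is incomplete.
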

\begin{proof}
From identity $(\ref{f8})$, using $(\ref{f3})$, we have
\begin{equation}\label{f9}
k=-\frac{(\varphi^d_k\varphi^b_iR_{djhb}-\eta_j\eta_h(g_{ki}-\eta_k\eta_i))\upsilon^k\upsilon^j\upsilon^h\upsilon^i}{(g_{kj}-\eta_k\eta_j)(g_{ih}-\eta_i\eta_h)\upsilon^k\upsilon^j\upsilon^h\upsilon^i}.
\end{equation}
Now if we assume that $k$ is independent on the choice of the
paraholomorphic section at $p\in M$, then $(\ref{f9})$ is
equivalent to
\begin{equation}\label{f10}
(\varphi^d_k\varphi^b_iR_{djhb}-\eta_j\eta_h(g_{ki}-\eta_k\eta_i)+k(g_{kj}-\eta_k\eta_j)(g_{ih}-\eta_i\eta_h))\upsilon^k\upsilon^j\upsilon^h\upsilon^i=0
\end{equation}
for any vector $\upsilon$. From here, we obtain
$$2(\varphi^d_k\varphi^b_iR_{djhb}+\varphi^d_j\varphi^b_iR_{dhkb}+\varphi^d_h\varphi^b_iR_{dkjb})=-2(\varphi_{ik}\varphi_{jh}+\varphi_{ij}\varphi_{hk}+\varphi_{ih}\varphi_{kj})-2k(g_{kj}g_{ih}+$$
$$+g_{hj}g_{ki}+g_{ij}g_{kh})-6(k+1)\eta_i\eta_j\eta_k\eta_h+(1+2k)(g_{ij}\eta_k\eta_h+g_{ih}\eta_k\eta_j+g_{ik}\eta_j\eta_h+g_{kh}\eta_i\eta_j+$$
$$+g_{hj}\eta_k\eta_i+g_{kj}\eta_i\eta_h)+(g_{ij}\eta_k\eta_h+g_{ih}\eta_k\eta_j+g_{ik}\eta_j\eta_h-g_{kh}\eta_i\eta_j-g_{hj}\eta_k\eta_i-g_{kj}\eta_i\eta_h)$$
by virtue of $(\ref{f6})$.

Using $(\ref{f9})$ and $(\ref{f10})$, we have
$$3\varphi^d_k\varphi^b_iR_{djhb}=R_{kijh}-R_{jihk}+3(g_{ki}g_{hj}-g_{ih}g_{kj})-k(g_{ji}g_{hk}+g_{kj}g_{hi}+g_{ki}g_{hj})+$$
$$+(k+1)(g_{ij}\eta_k\eta_h+g_{ih}\eta_k\eta_j+g_{ik}\eta_j\eta_h+g_{kh}\eta_i\eta_j+g_{kj}\eta_i\eta_h)+(k-2)g_{hj}\eta_k\eta_i-$$
$$-3\varphi_{ij}\varphi_{hk}-3(k+1)\eta_i\eta_j\eta_k\eta_h.$$
Contracting the last equation with $\varphi^i_m\varphi^k_l$ and
using $(\ref{f3})$ and $(\ref{f5})$, we obtain
$$\varphi^i_m\varphi^k_lR_{ijhk}=3R_{ljhm}+R_{lmjh}+(k+2)\varphi_{hm}\varphi_{jl}-(k+1)\varphi_{mj}\varphi_{hl}-(k-3)g_{ml}g_{hj}-$$
$$-2g_{mj}g_{hl}-g_{jl}g_{hh}-(k+1)\eta_m\eta_j\eta_l\eta_h+kg_{ih}\eta_l\eta_m+(k+1)g_{lm}\eta_h\eta_j.$$
The skew-symmetric part of the last equation with respect to $m$
and $j$, and $(\ref{f6})$ yield
\begin{equation}\label{f11}
4R_{mjhl}=(k-3)(g_{ml}g_{hj}-g_{jl}g_{hm})+(k+1)(g_{hm}\eta_j\eta_l+g_{lj}\eta_m\eta_h-g_{lm}\eta_j\eta_h-
\end{equation}
$$-g_{hj}\eta_l\eta_m+\varphi_{hj}\varphi_{ml}-\varphi_{hm}\varphi_{jl}+2\varphi_{mj}\varphi_{hl}).$$
Contracting the last equation with $g^{ml}$, we obtain
\begin{equation}\label{f12}
2\mathbf{r}_{jh}=[n(k-3)+k+1]g_{hj}-(n+1)(k+1)\eta_j\eta_h,
\end{equation}
where $\mathbf{r}_{ij}=g^{kl}R_{kijl}$ is the Ricci tensor.
\footnote{From here, $M$ is an $\eta-$Einstein manifold (see
$Definition~\ref{eta}$).}

Moreover contracting the last equation with $g^{jh}$, we have
\begin{equation}\label{f13}
2\mathbf{s}=n(2n+1)(k-3)+n(k+1),
\end{equation}
where $\mathbf{s}=g^{ij}\mathbf{r}_{ij}$ is the scalar curvature.
On the other hand, from the second Bianchi identity, we get
\begin{equation}\label{f14}
\nabla_i(\mathbf{s})-2g^{jh}\nabla_j(\mathbf{r}_{ih})=0.
\end{equation}
Substituting $(\ref{f12})$ and $(\ref{f13})$ into $(\ref{f14})$,
we obtain
\begin{equation}\label{f15}
(n-1)\nabla_i(\mathbf{s})+\eta_i\xi^j\nabla_j(\mathbf{s})=0.
\end{equation}
Transvecting $(\ref{f15})$ with $\xi^i$, we get
$$\xi^j\nabla_j(\mathbf{s})=0$$
and hence $(\ref{f15})$ implies
$$\nabla_i(\mathbf{s})=0\quad (n>1).$$
Thus, the paraholomorphic sectional curvature $k$ is a constant.
\end{proof}
A paraSasakian manifold is said to be of \emph{constant
paraholomorphic curvature}, denoted by $M(k)$, if the
paraholomorphic sectional curvature $k$ is constant on the
manifold.

\begin{thm}\label{t13}
Let $M^{2n+1}(\varphi,\xi,\eta,g)$ be a paraSasakian manifold with
constant paraholomorphic sectional curvature $k$. Then applying a
$\mathcal{D}$-homothetic transformation on $M$, we obtain a
paraSasakian structure
$(\overline{\varphi},\overline{\xi},\overline{\eta},\overline{g})$
on $M$, for which the paraholomorphic sectional curvature is
constant and equal to $\frac{k-3}{\alpha}+3$.
\end{thm}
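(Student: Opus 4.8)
The plan is to compute how the curvature tensor transforms under a $\mathcal{D}$-homothetic transformation and read off the effect on paraholomorphic sectional curvature. Since Theorem~4.8 of \cite{Z1} already guarantees that the $\mathcal{D}$-homothetic deformed structure $(\overline{\varphi},\overline{\xi},\overline{\eta},\overline{g})$ is again paraSasakian, and since by Theorem~\ref{t11} a paraSasakian manifold with pointwise-constant paraholomorphic sectional curvature automatically has \emph{constant} such curvature, it suffices to identify the constant $\overline{k}$. The cleanest route is to insert the explicit form \eqref{d1} of the transformation, namely $\overline{g}=\alpha g+(\alpha^2-\alpha)\eta\otimes\eta$, $\overline{\eta}=\alpha\eta$, $\overline{\varphi}=\varphi$, into the formula \eqref{f20}, which characterizes a paraSasakian space form, and check that $\overline{R}$ obtained from the Levi-Civita connection of $\overline{g}$ is of the same form \eqref{f20} with $k$ replaced by $\overline{k}=\frac{k-3}{\alpha}+3$.

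Concretely, I would proceed in the following steps. First I would recall (or re-derive) the standard transformation law for the Levi-Civita connection under $\overline{g}=\alpha g+(\alpha^2-\alpha)\eta\otimes\eta$ with $\alpha$ constant: the difference tensor $\overline{\nabla}-\nabla$ is expressible through $\alpha$, $\eta$, $\xi$ and $\nabla\eta=\varphi$ using \eqref{f1}. Second, I would substitute this into the standard formula for the curvature of $\overline{\nabla}$ in terms of that of $\nabla$ and the difference tensor, keeping careful track of which terms involve $\eta\otimes\eta$ and $\varphi$. Third, starting from the hypothesis that $R$ has the form \eqref{f20}, I would assemble $\overline{R}$ and verify it again has the shape \eqref{f20} — a pure $gg$ part, an $\eta\eta$-times-$g$ part, and $\varphi\varphi$ parts — and simply match coefficients. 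The coefficient of the $(g_{ml}g_{hj}-g_{jl}g_{hm})$-type term (now with respect to $\overline{g}$, which agrees with $\alpha g$ on $\mathbb D$) will yield $\frac{\overline{k}-3}{4}=\frac{1}{\alpha}\cdot\frac{k-3}{4}$, i.e. $\overline{k}=\frac{k-3}{\alpha}+3$, and the remaining coefficients must automatically be consistent because Theorem~4.8 of \cite{Z1} already tells us the output is paraSasakian, so the curvature is forced into the form \eqref{f20} for \emph{some} constant, which must then equal $\overline{k}$.

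The main obstacle I anticipate is the bookkeeping in raising and lowering indices with the two different metrics: $\overline{g}$ and $g$ agree on $\mathbb D$ only up to the factor $\alpha$, and differ substantially on $\xi$ (since $\overline{g}(\xi,\xi)=\alpha^2$ while $\overline{\eta}=\alpha\eta$ and $\overline{\xi}=\alpha^{-1}\xi$), so the inverse metric $\overline{g}^{ij}=\alpha^{-1}g^{ij}+(\alpha^{-2}-\alpha^{-1})\xi^i\xi^j$ must be handled carefully, especially when contracting to compare the $\varphi\varphi$ terms, where $\overline{\varphi}_{ij}=\overline{g}_{il}\overline{\varphi}^l_j=\alpha\varphi_{ij}$ picks up a factor $\alpha$. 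A shortcut that sidesteps most of this: rather than recomputing $\overline{R}$ from scratch, evaluate the paraholomorphic sectional curvature directly from \eqref{f8} applied to the barred structure, using that on a paraholomorphic section spanned by $\varphi\upsilon,\varphi^2\upsilon\in\mathbb D$ one has $\overline{R}_{dcab}=\alpha\,R_{dcab}+(\text{terms with }\eta\text{ that vanish on }\mathbb D)$ while the denominator $\overline{g}_{dc}(\varphi\upsilon)^d(\varphi\upsilon)^c$ scales by $\alpha$; comparing numerator and denominator scalings, together with the additive constant coming from the $\eta\eta$-correction to $\overline{R}$, reproduces $\overline{k}=\frac{k-3}{\alpha}+3$ with far less computation. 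Either way, the only genuine content is the transformation law of the Levi-Civita connection and its curvature under a constant $\mathcal{D}$-homothety, which is entirely routine.
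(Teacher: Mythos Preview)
Your main approach is essentially the same as the paper's: compute the curvature transformation under the $\mathcal{D}$-homothety, substitute the space-form expression \eqref{f20}, and read off $\overline{k}=\frac{k-3}{\alpha}+3$; the only difference is that the paper quotes the curvature transformation law (your step two) directly from \cite{Z1} rather than re-deriving it from the difference tensor $\overline{\nabla}-\nabla$. One small slip in your shortcut: the additive shift in $\overline{k}$ on a paraholomorphic plane comes from the $\varphi\varphi$ terms in the curvature transformation (which do \emph{not} vanish on $\mathbb D$), not from the $\eta\eta$ terms, so if you pursue that route be sure to track the $-(\alpha-1)(2\varphi_k^l\varphi_{ij}-\varphi_i^l\varphi_{jk}+\varphi_j^l\varphi_{ik})$ contribution.
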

\begin{proof}
Let $M^{2n+1}(\varphi,\eta,\xi,g)$ be a paraSasakian manifold with
constant paraholomorphic sectional curvature $k$. Suppose that
$M^{2n+1}(\overline{\varphi},\overline{\eta},\overline{\xi},\overline{g})$
is obtained from $M^{2n+1}(\varphi,\eta,\xi,g)$ through a
$\mathcal{D}$-homothetic transformation. Using the formulas
\cite{Z1}, we get
\begin{equation}\label{f32}
\overline{g}_{jk}=\alpha g_{jk}+\alpha(\alpha-1) \eta_j \eta_k,
\end{equation}
\begin{gather}\label{f33}
\overline{R}_{ijk}^l=R_{ijk}^l-(\alpha-1)(2\varphi_k^l\varphi_{ij}-
\varphi_i^l\varphi_{jk}+\varphi_j^l\varphi_{ik})+
\\\nonumber+(\alpha-1)(\nabla_j\varphi_i^l\eta_k+\nabla_j\varphi_k^l\eta_i-
\nabla_i\varphi_j^l\eta_k-\nabla_i\varphi_k^l\eta_j)+
(\alpha-1)^2(\delta_j^l\eta_i\eta_k-\delta_i^l\eta_j\eta_k),
\end{gather}
\begin{equation}\label{f34}
\mathbf{\overline{r}}_{jk}=\mathbf{r}_{jk}+2(\alpha-1)g_{jk}-2(\alpha-1)((2n+1)+n(\alpha-1))\eta_j\eta_k,
\end{equation}
\begin{equation}\label{f35}
\mathbf{\overline{s}}=\frac{1}{\alpha}\mathbf{s}+2n\frac{\alpha-1}{\alpha}.
\end{equation}
Since $M^{2n+1}(\varphi,\eta,\xi,g)$ is paraSasakian, the
identities $(\ref{f1})$, $(\ref{f2})$ imply
\begin{gather}\label{f100}
\overline{R}_{ijk}^l=R_{ijk}^l-(\alpha-1)(2\varphi_k^l\varphi_{ij}-
\varphi_i^l\varphi_{jk}+\varphi_j^l\varphi_{ik})+
(\alpha^2-1)(\delta_j^l\eta_i\eta_k-\delta_i^l\eta_j\eta_k),
\end{gather}
From $(\ref{f20})$, $(\ref{f32})$ and $(\ref{f100})$ we get
\begin{equation}\label{f101}
\overline{R}_{mjhl}=\frac{\overline{k}-3}{4}(\overline{g}_{ml}\overline{g}_{hj}-\overline{g}_{jl}\overline{g}_{hm})+\frac{\overline{k}+1}{4}(\overline{g}_{hm}\overline{\eta}_j\overline{\eta}_l+\overline{g}_{lj}\overline{\eta}_m\overline{\eta}_h-\overline{g}_{lm}\overline{\eta}_j\overline{\eta}_h-
\end{equation}
$$-\overline{g}_{hj}\overline{\eta}_l\overline{\eta}_m+\overline{\varphi}_{hj}\overline{\varphi}_{ml}-\overline{\varphi}_{hm}\overline{\varphi}_{jl}+2\overline{\varphi}_{mj}\overline{\varphi}_{hl}),$$
where $\overline{k}=\frac{k-3}{\alpha}+3$.
\end{proof}
The hyperboloid $\mathbf{H}^{2n+1}_{n+1}(1)$ has sectional
curvature equal to minus one \cite{W1}. In particular, its
paraholomorphic sectional curvature is also minus one. Thus,
applying a $\mathcal{D}$-homothetic transformation on
$\mathbf{H}^{2n+1}_{n+1}(1)$ (i.e., $\overline{\eta}=\alpha\eta$),
we obtain a paraSasakian manifold with constant paraholomorphic
sectional curvature $k=-\frac{4}{\alpha}+3$. Let us denote this
manifold by $\mathbf{H}(k)$.

\section{The PC-Bochner curvature tensor}

For a paraSasakian manifold we define the tensor $\mathbf{B}$
\begin{equation}\label{f30}
\mathbf{B}_{ijkl}=R_{ijkl}+\frac{1}{2n+4}(\mathbf{r}_{ik}g_{jl}-\mathbf{r}_{jk}g_{il}+\mathbf{r}_{jl}g_{ik}-\mathbf{r}_{il}g_{jk}+\mathbf{r}_{sk}\varphi_i^s\varphi_{jl}-
\end{equation}
$$-\mathbf{r}_{sk}\varphi_j^s\varphi_{il}+\mathbf{r}_{sl}\varphi_j^s\varphi_{ik}-\mathbf{r}_{sl}\varphi_i^s\varphi_{jk}+2\mathbf{r}_{sj}\varphi_i^s\varphi_{kl}+2\mathbf{r}_{sl}\varphi_k^s\varphi_{ij}-\mathbf{r}_{ik}\eta_j\eta_l+$$
$$+\mathbf{r}_{jk}\eta_i\eta_l-\mathbf{r}_{jl}\eta_i\eta_k+\mathbf{r}_{il}\eta_j\eta_k)-\frac{k+2n}{2n+4}(\varphi_{ik}\varphi_{jl}-\varphi_{jk}\varphi_{il}+2\varphi_{ij}\varphi_{kl})+$$
$$+\frac{k-4}{2n+4}(g_{ik}g_{jl}-g_{jk}g_{il})-\frac{k}{2n+4}(g_{ik}\eta_j\eta_l-g_{jk}\eta_i\eta_l+g_{jl}\eta_i\eta_k-g_{il}\eta_j\eta_k),$$
where $k=-\frac{\mathbf{s}-2n}{2n+2}$.

By straightforward computations from the identity $(\ref{f30})$ we
obtain the following
\begin{lem} The tensor $\mathbf{B}$ has identities
\begin{equation}\label{31}
\mathbf{B}_{ijkl}=-\mathbf{B}_{jikl},\quad
\mathbf{B}_{ijkl}=\mathbf{B}_{klij},\quad
\mathbf{B}_{ijkl}+\mathbf{B}_{jkil}+\mathbf{B}_{kijl}=0,\quad
g^{il}\mathbf{B}_{ijkl}=0,
\end{equation}
$$\xi^i\mathbf{B}_{ijkl}=0,\quad \mathbf{B}_{sjkl}\varphi^s_i=-\mathbf{B}_{iskl}\varphi^s_j.$$
\end{lem}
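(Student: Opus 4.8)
The plan is to read off from the definition $(\ref{f30})$ that $\mathbf{B}$ is a linear combination of algebraic curvature tensors of a few standard types, which takes care of the algebraic symmetries for free, and then to establish the three contraction identities by substituting the paraSasakian formulas of Section~3. Indeed, apart from $R$ itself, $(\ref{f30})$ is built from: the Kulkarni--Nomizu products of the symmetric $(0,2)$-tensors $\mathbf{r}$, $g$ and $\eta\otimes\eta$ with one another (the lines containing $\mathbf{r}_{ik}g_{jl}$, $g_{ik}g_{jl}-g_{jk}g_{il}$, $g_{ik}\eta_j\eta_l$ and $\mathbf{r}_{ik}\eta_j\eta_l$); the tensor $\varphi_{ik}\varphi_{jl}-\varphi_{jk}\varphi_{il}+2\varphi_{ij}\varphi_{kl}$ associated to the $2$-form $\varphi$; and --- as one checks by inspection --- the bilinear polarisation of this last tensor in the two $(0,2)$-tensors $\varphi_{ij}$ and $\chi_{ij}:=\mathbf{r}_{sj}\varphi^s_i$ (this is the block $\mathbf{r}_{sk}\varphi^s_i\varphi_{jl}-\mathbf{r}_{sk}\varphi^s_j\varphi_{il}+\dots+2\mathbf{r}_{sl}\varphi^s_k\varphi_{ij}$). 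Every summand of each of these types satisfies $\mathbf{B}_{ijkl}=-\mathbf{B}_{jikl}$, $\mathbf{B}_{ijkl}=\mathbf{B}_{klij}$ and $\mathbf{B}_{ijkl}+\mathbf{B}_{jkil}+\mathbf{B}_{kijl}=0$ by purely formal manipulations --- symmetry of $\mathbf{r},g,\eta\otimes\eta$, skew--symmetry of $\varphi_{ij}$, and, for the polarisation, bilinearity together with the observation that $\psi_{ik}\psi_{jl}-\psi_{jk}\psi_{il}+2\psi_{ij}\psi_{kl}$ enjoys these symmetries for \emph{every} $2$-form $\psi$. The one point which is not formal is that $\chi_{ij}$ must itself be skew--symmetric, i.e. $\mathbf{r}_{sj}\varphi^s_i+\mathbf{r}_{si}\varphi^s_j=0$; equivalently, the Ricci operator of a paraSasakian manifold commutes with $\varphi$. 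This follows from a suitable contraction of $(\ref{f5})$ (it is also recorded in \cite{Z1}), and it settles the first three identities of $(\ref{31})$.

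For $g^{il}\mathbf{B}_{ijkl}=0$ I would contract $(\ref{f30})$ with $g^{il}$, using $g^{il}R_{ijkl}=\mathbf{r}_{jk}$, $g^{il}g_{il}=2n+1$, $\tr\varphi=0$, $g^{il}\varphi^s_i\varphi_{jl}=\xi^s\eta_j-\delta^s_j$ and $\mathbf{r}_{sr}\xi^r=-2n\eta_s$ (the last of these a consequence of $(\ref{f4})$). One finds that the terms proportional to $\mathbf{r}_{jk}$ cancel identically; that the coefficient of $g_{jk}$ comes out as a multiple of $\mathbf{s}-2n+(2n+2)k$, which vanishes exactly because of the normalisation $k=-(\mathbf{s}-2n)/(2n+2)$ written into $(\ref{f30})$; and that the coefficient of $\eta_j\eta_k$ then vanishes as well (either directly, or a posteriori by contracting once more with $\xi^j$ and using $\xi^i\mathbf{B}_{ijkl}=0$). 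No $\varphi_{jk}$-term survives, since $\tr\varphi=0$.

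For $\xi^i\mathbf{B}_{ijkl}=0$ I would contract $(\ref{f30})$ with $\xi^i$. Every $\varphi$-bilinear term drops out because $\varphi^s_i\xi^i=0$ and $\varphi_{il}\xi^i=0$; from $(\ref{f4})$ and the symmetries of $R$ one has $\xi^iR_{ijkl}=g_{jl}\eta_k-g_{jk}\eta_l$; and the surviving $\mathbf{r}$- and $\eta$-terms collapse, via $\mathbf{r}_{sr}\xi^r=-2n\eta_s$, to leave $\xi^i\mathbf{B}_{ijkl}=(g_{jl}\eta_k-g_{jk}\eta_l)\bigl(1-\tfrac{2n}{2n+4}+\tfrac{k-4}{2n+4}-\tfrac{k}{2n+4}\bigr)=0$. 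Note that the $k$-dependent contributions cancel, so this identity imposes no constraint on the constant.

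The remaining identity $\mathbf{B}_{sjkl}\varphi^s_i=-\mathbf{B}_{iskl}\varphi^s_j$ is the one I expect to be the main obstacle. Using the antisymmetry in $ij$ and $\xi^i\mathbf{B}_{ijkl}=0$ just proved, it is equivalent --- apply $\varphi^j_m$ and use $\varphi^j_m\varphi^s_j=\delta^s_m-\xi^s\eta_m$ together with $\xi^s\mathbf{B}_{iskl}=0$ --- to $\varphi^s_i\varphi^j_m\mathbf{B}_{sjkl}=-\mathbf{B}_{imkl}$. For the Levi--Civita part, this is precisely what $(\ref{f5})$ delivers, up to a fixed expression quadratic in $\varphi$ and $g$; one then contracts each correction term of $(\ref{f30})$ with $\varphi^s_i\varphi^j_m$ --- using $\varphi^s_i\varphi_{sk}=\eta_i\eta_k-g_{ik}$, the commutation of the Ricci operator with $\varphi$, and the vanishing of the $\xi$-contractions produced by the $\eta$-terms --- and checks that the total cancels the residual term coming from $R$, the $k$-dependent pieces again cancelling among themselves. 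This step is laborious rather than formal precisely because it invokes the paraSasakian curvature identity $(\ref{f5})$ and must balance a fairly long list of quadratic-in-$\varphi$ expressions; the remaining five identities reduce to bookkeeping once the commutation $\mathbf{r}_{sj}\varphi^s_i+\mathbf{r}_{si}\varphi^s_j=0$ is in hand.
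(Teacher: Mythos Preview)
Your plan is correct and matches the paper's approach: the paper simply says ``By straightforward computations from the identity $(\ref{f30})$'', and your proposal spells out exactly such a computation, organised cleanly by recognising the Kulkarni--Nomizu and $\varphi$-quadratic building blocks and invoking the paraSasakian identities of Section~3 (in particular $(\ref{f4})$ and $(\ref{f5})$, and the Ricci--$\varphi$ commutation). There is nothing materially different between the two; your write-up is just more explicit than the one-line proof in the paper.
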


\begin{lem}\label{t14}
The tensor $\mathbf{B}$ is invariant under
$\mathcal{D}$-homothetic transformations, i.e., if
$\overline{\eta}=\alpha\eta$, then
$\alpha^{-1}\overline{\mathbf{B}}=\mathbf{B}$.
\end{lem}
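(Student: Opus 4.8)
The plan is a direct verification: substitute the behaviour of every ingredient of the defining formula (\ref{f30}) under the $\mathcal D$-homothetic transformation (\ref{d1}) and check that the right-hand side, written for the barred data, collapses to $\alpha$ times the original. First I would assemble the transformation laws I need. From (\ref{d1}): $\overline{\varphi}{}^l_k=\varphi^l_k$, $\overline{\eta}_j=\alpha\eta_j$, $\overline{\xi}{}^j=\alpha^{-1}\xi^j$, $\overline{g}_{jk}=\alpha g_{jk}+\alpha(\alpha-1)\eta_j\eta_k$, and, lowering an index in $\overline{\varphi}$ with $\overline{g}$ and using $\eta_l\varphi^l_k=0$, $\overline{\varphi}_{jk}=\alpha\varphi_{jk}$. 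For the curvature I would start from (\ref{f100}) and lower the last index with $\overline g$, using the paraSasakian identities (\ref{f1}), (\ref{f2}), (\ref{f4}) (in particular $\xi^pR_{ijkp}=g_{ik}\eta_j-g_{jk}\eta_i$ and $\eta_l\varphi^l_k=0$), which collapses the $\eta$-quartic contributions and leads to a reduced identity of the form
\begin{equation*}
\overline{R}_{ijkl}=\alpha R_{ijkl}+\alpha(\alpha-1)\bigl(2\varphi_{kl}\varphi_{ij}-\varphi_{il}\varphi_{jk}+\varphi_{jl}\varphi_{ik}\bigr)+\alpha(\alpha^2-1)\bigl(g_{lj}\eta_i\eta_k-g_{li}\eta_j\eta_k\bigr)+\alpha(\alpha-1)\eta_l\bigl(g_{ik}\eta_j-g_{jk}\eta_i\bigr).
\end{equation*}
From (\ref{f34}), (\ref{f35}) I have $\overline{\mathbf{r}}_{jk}$ and $\overline{\mathbf{s}}$, and since $k=-\frac{\mathbf{s}-2n}{2n+2}$ a one-line computation gives the key scaling $\overline{k}=k/\alpha$ (because $\overline{\mathbf{s}}-2n=\frac{\mathbf{s}-2n}{\alpha}$).

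Second, I would plug all of this into the barred version of (\ref{f30}), also using $\overline{\mathbf{r}}_{sk}\varphi^s_i=\mathbf{r}_{sk}\varphi^s_i-2(\alpha-1)\varphi_{ik}$ (the $\eta_s\eta_k$ part of $\overline{\mathbf{r}}$ being killed by $\varphi^s_i$), and likewise simplify the six $\overline{\mathbf{r}}\,\overline{\varphi}\,\overline{\varphi}$ terms, the $\overline{k}$-dependent coefficients $\overline{k}+2n$, $\overline{k}-4$, $\overline{k}$, and the products $\overline{g}\overline{g}$, $\overline{g}\overline{\eta}\overline{\eta}$, $\overline{\eta}^{\otimes4}$. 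Splitting off the part that reproduces $\alpha$ times each term of (\ref{f30}), the claim $\alpha^{-1}\overline{\mathbf{B}}=\mathbf{B}$ becomes the vanishing of a remainder which is a polynomial in $(\alpha-1)$ and $(\alpha-1)^2$ whose coefficients are linear combinations of the monomials $\mathbf{r}_{ik}\eta_j\eta_l$, $g_{ik}g_{jl}$, $g_{ik}\eta_j\eta_l$, $\eta_i\eta_j\eta_k\eta_l$, $\varphi_{ik}\varphi_{jl}$, $\varphi_{ij}\varphi_{kl}$ (and their index permutations). I would verify this remainder is zero one monomial type at a time: the $\eta^{\otimes4}$ terms cancel internally (as they already did in reducing $\overline{R}$), the $\varphi\varphi$ contributions from $\overline{R}$, from the $\overline{\mathbf{r}}\,\overline{\varphi}\,\overline{\varphi}$ block, and from the mismatch between $\overline{k}+2n$ and $\alpha(k+2n)$ cancel against one another, and the $g\otimes g$, $g\otimes\eta\otimes\eta$, $\mathbf{r}\otimes\eta\otimes\eta$ families cancel similarly, each cancellation reducing to a single linear identity in $\alpha$ and $n$.

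The conceptual content sits entirely in the first step — getting the reduced transformation formulas for $\overline{R}_{ijkl}$, $\overline{\mathbf{r}}_{jk}$ and $\overline{k}$ right — after which nothing new occurs. Hence the main obstacle is purely organizational: controlling a long bookkeeping and, above all, not slipping on sign conventions (the antisymmetry $\varphi_{ij}=-\varphi_{ji}$, the placement of the lowered index in $R_{ijkl}$ relative to the Ricci contraction in $\mathbf{r}_{ij}=g^{kl}R_{kijl}$, and the signs produced when lowering indices against $\varphi$ with $\overline{g}$). For this reason I expect the cleanest route is the monomial-by-monomial check rather than expanding everything at once; with (\ref{f1})--(\ref{f4}) and the transformation rules in hand, each check is routine, which is exactly the sense of ``by straightforward computations''.
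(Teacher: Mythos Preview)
Your plan is correct and is exactly the paper's approach: the paper's proof is the single sentence ``direct consequence of identities (\ref{f32})--(\ref{f35}),'' and what you have written is precisely an expanded blueprint of that computation, including the correct key observation $\overline{k}=k/\alpha$. Nothing different is needed.
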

\begin{proof}
The claim of the Lemma is direct consequence of identities
$(\ref{f32}-\ref{f35})$.

\end{proof}
\begin{defn}
The tensor $\mathbf{B}$ is said to be paracontact Bochner
curvature tensor (PC-Bochner curvature tensor for short).
\end{defn}
\begin{defn}\label{eta}
If the Ricci tensor of a paraSasakian manifold $M$ is of the form
$$Ric(X,Y)=ag(X,Y)+b\eta(X)\eta(Y),$$
$a$ and $b$ being constant, then $M$ is called an
\emph{$\eta-$Einstein manifold}.
\end{defn}
Using the PC-Bochner curvature tensor we have
\begin{thm}\label{t15}
Let $M^{2n+1}(\varphi,\eta,\xi,g)$ be a paraSasakian manifold.
Then $M$ is with constant paraholomorphic sectional curvature if
and only if it is $\eta$-Einstein and with vanishing PC-Bochner
tensor.
\end{thm}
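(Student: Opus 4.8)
The plan is to prove the two implications separately, leveraging the curvature formula $(\ref{f20})$ from Theorem \ref{t11} and the explicit definition $(\ref{f30})$ of the tensor $\mathbf{B}$.

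For the forward direction, assume $M$ has constant paraholomorphic sectional curvature $k$. Then $(\ref{f20})$ gives an explicit formula for $R_{mjhl}$, and the contraction $(\ref{f12})$ shows that $2\mathbf{r}_{jh}=[n(k-3)+k+1]g_{hj}-(n+1)(k+1)\eta_j\eta_h$, which is exactly the $\eta$-Einstein condition of Definition \ref{eta} with constants $a=\tfrac12[n(k-3)+k+1]$ and $b=-\tfrac12(n+1)(k+1)$. It remains to show $\mathbf{B}=0$. Here I would substitute the $\eta$-Einstein expression for $\mathbf{r}_{jk}$ into the definition $(\ref{f30})$ of $\mathbf{B}_{ijkl}$; all the Ricci terms collapse into multiples of $g$'s, $\eta$'s, and $\varphi$-contractions (using $\varphi^s_i\varphi_{jl}$ identities and $\varphi^s_i\eta_s=0$), leaving a purely "curvature-model" tensor built from $g_{ik}g_{jl}-g_{jk}g_{il}$, the $\eta\otimes\eta$ terms, and the $\varphi_{ik}\varphi_{jl}$ terms. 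One then checks that the scalar $k=-\tfrac{\mathbf{s}-2n}{2n+2}$ appearing in $(\ref{f30})$ coincides with the paraholomorphic sectional curvature $k$ via $(\ref{f13})$, and that $R_{ijkl}$ given by $(\ref{f20})$ exactly cancels the remaining model tensor. This is essentially a bookkeeping verification matching coefficients of the finitely many independent tensor monomials.

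For the converse, assume $M$ is $\eta$-Einstein (so $\mathbf{r}_{jk}=ag_{jk}+b\eta_j\eta_k$) and $\mathbf{B}=0$. Contracting $\mathbf{r}_{jk}=ag_{jk}+b\eta_j\eta_k$ with $g^{jk}$ and using $\eta_j\xi^j=1$ gives $\mathbf{s}=(2n+1)a+b$; transvecting with $\xi^j\xi^k$ and using $(\ref{f4})$ (which forces $\mathbf{r}_{jk}\xi^k=2n\eta_j$, since $\mathbf{r}_{jk}\xi^k = R^l{}_{jkl}\xi^k\cdots$, i.e. the $\xi$-Ricci eigenvalue is fixed by the paraSasakian identities) pins down $a+b=2n$, so both $a$ and $b$ — hence $\mathbf{s}$ and the scalar $k=-\tfrac{\mathbf{s}-2n}{2n+2}$ — are constants determined by $a$. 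Then $\mathbf{B}=0$ means $R_{ijkl}$ equals minus the remaining terms in $(\ref{f30})$; substituting the $\eta$-Einstein form of $\mathbf{r}$ into those terms and simplifying reduces $R_{ijkl}$ precisely to the right-hand side of $(\ref{f20})$ with this constant $k$. Finally, plugging that expression for $R$ into the definition $(\ref{f8})$ of paraholomorphic sectional curvature (via the intermediate form $(\ref{f9})$) and using $\eta_k\upsilon^k=0$, $g(\varphi\upsilon,\varphi\upsilon)=-g(\upsilon,\upsilon)$ shows the ratio equals $k$ independently of the section, so $M$ has constant paraholomorphic sectional curvature.

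The main obstacle is the converse direction's algebra: one must be careful that $\mathbf{B}=0$ together with $\eta$-Einstein genuinely forces $R$ into the model form $(\ref{f20})$ and not merely a weaker symmetry class. The key point making this work is that the $\varphi$-trace and $\xi$-contraction identities $(\ref{31})$ for $\mathbf{B}$, combined with the paraSasakian identities $(\ref{f4})$, $(\ref{f5})$, leave no free tensor component: the Ricci data plus the four-tensor symmetries of a paraSasakian space form exhaust the space in which $R$ can live once $\mathbf{B}$ is killed. Verifying the coefficient $k=-\tfrac{\mathbf{s}-2n}{2n+2}$ is consistent on both sides (it must match $\tfrac{2\mathbf{s}-n(k+1)}{n(2n+1)}+3$ from $(\ref{f13})$) is the one spot where a sign or normalization slip would be easy, so I would double-check that identity first.
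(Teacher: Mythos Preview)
Your approach is essentially the paper's own proof: forward via $(\ref{f20})$ and $(\ref{f12})$, converse by substituting the $\eta$-Einstein Ricci into $\mathbf{B}=0$ to recover the model form $(\ref{f20})$. One correction on the point you flagged: the scalar $k=-\tfrac{\mathbf{s}-2n}{2n+2}$ in the definition $(\ref{f30})$ is \emph{not} the paraholomorphic sectional curvature --- by $(\ref{f41})$ it equals $\tfrac{n(3-k)}{2}$ (the paper overloads the letter $k$), and this is precisely what makes the coefficients in $(\ref{f42})$ match; also, $(\ref{f4})$ gives $\mathbf{r}_{jk}\xi^k=-2n\,\eta_j$, so $a+b=-2n$, consistent with the values $a=\tfrac{\mathbf{s}}{2n}+1$, $b=-\tfrac{\mathbf{s}}{2n}-(2n+1)$ the paper quotes from \cite{Z1}.
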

\begin{proof}
If $M$ is with a constant paraholomorphic sectional curvature $k$,
then from equality $(\ref{f20})$ we obtain
\begin{equation}\label{f40}
\mathbf{r}_{jh}=\frac{(k-3)n+k+1}{2}g_{jh}-\frac{(k+1)(n+1)}{2}\eta_j\eta_h,
\end{equation}
\begin{equation}\label{f41}
\frac{(k-3)n}{2}=\frac{\mathbf{s}-2n}{2n+2}
\end{equation}
and hence $M$ is $\eta$-Einstein.

Using $(\ref{f20})$ and $(\ref{f40})$ we have that
\begin{equation}\label{f42}
R_{ijkl}+\frac{1}{2n+4}(\mathbf{r}_{ik}g_{jl}-\mathbf{r}_{jk}g_{il}+\mathbf{r}_{jl}g_{ik}-\mathbf{r}_{il}g_{jk}+\mathbf{r}_{sk}\varphi_i^s\varphi_{jl}-
\end{equation}
$$-\mathbf{r}_{sk}\varphi_j^s\varphi_{il}+\mathbf{r}_{sl}\varphi_j^s\varphi_{ik}-\mathbf{r}_{sl}\varphi_i^s\varphi_{jk}+2\mathbf{r}_{sj}\varphi_i^s\varphi_{kl}+2\mathbf{r}_{sl}\varphi_k^s\varphi_{ij}-\mathbf{r}_{ik}\eta_j\eta_l+$$
$$+\mathbf{r}_{jk}\eta_i\eta_l-\mathbf{r}_{jl}\eta_i\eta_k+\mathbf{r}_{il}\eta_j\eta_k)=\frac{\frac{(3-k)n}{2}+2n}{2n+4}(\varphi_{ik}\varphi_{jl}-\varphi_{jk}\varphi_{il}+2\varphi_{ij}\varphi_{kl})-$$
$$-\frac{\frac{(3-k)n}{2}-4}{2n+4}(g_{ik}g_{jl}-g_{jk}g_{il})+\frac{\frac{(3-k)n}{2}}{2n+4}(g_{ik}\eta_j\eta_l-g_{jk}\eta_i\eta_l+g_{jl}\eta_i\eta_k-g_{il}\eta_j\eta_k).$$
From $(\ref{f42})$ and $(\ref{f41})$, we obtain $\mathbf{B}=0$.

Since $M$ is an $\eta$-Einstein manifold, we have that
\begin{equation}
\mathbf{r}_{jk}=ag_{jk}+b\eta_j\eta_k,
\end{equation}
where $a=\frac{\mathbf{s}}{2n}+1=const.$ and
$b=-\frac{\mathbf{s}}{2n}-(2n+1)=const.$ \cite{Z1}. Put
$\mathbf{B}=0$ into $(\ref{f30})$ to get
\begin{equation}\label{f36}
R_{mjhl}=\frac{k-3}{4}(g_{ml}g_{hj}-g_{jl}g_{hm})+\frac{k+1}{4}(g_{hm}\eta_j\eta_l+g_{lj}\eta_m\eta_h-g_{lm}\eta_j\eta_h-
\end{equation}
$$-g_{hj}\eta_l\eta_m+\varphi_{hj}\varphi_{ml}-\varphi_{hm}\varphi_{jl}+2\varphi_{mj}\varphi_{hl}),$$
where $k=\frac{\mathbf{s}+3n^2+n}{n(n+1)}$ is a constant.
\end{proof}
If $M$ is para CR-manifold, we may introduce the paracontact
conformal curvature tensor $W^{pc}$ \cite{S1}, which is an
analogue of the Chern-Moser tensor. It is defined by \cite{S1}
\begin{equation}
W^{pc}(X,Y,Z,W)=\widetilde{R}(X,Y,Z,W)-\frac{\widetilde{\mathbf{s}}}{4(n+1)(n+2)}(g(X,Z)g(Y,W)-g(Y,Z)g(X,W))+\\\nonumber
\end{equation}
$$+\frac{\widetilde{\mathbf{s}}}{4(n+1)(n+2)}(F(X,Z)F(Y,W)-F(Y,Z)F(X,W)+2F(X,Y)F(Z,W))+$$
$$+\frac{1}{2(n+2)}(g(X,Z)\widetilde{\mathbf{r}}(Y,W)-g(Y,Z)\widetilde{\mathbf{r}}(X,W)+g(Y,W)\widetilde{\mathbf{r}}(X,Z)-g(X,W)\widetilde{\mathbf{r}}(Y,Z))+$$
$$+\frac{1}{2(n+2)}(F(X,Z)\widetilde{\mathbf{r}}(Y,\varphi W)-F(Y,Z)\widetilde{\mathbf{r}}(X,\varphi W)+F(Y,W)\widetilde{\mathbf{r}}(X,\varphi Z)-F(X,W)\widetilde{\mathbf{r}}(Y,\varphi Z))$$
$$+\frac{1}{2(n+2)}(2F(X,Y)\widetilde{\mathbf{r}}(Z,\varphi W)+2F(Z,W)\widetilde{\mathbf{r}}(X,\varphi
Y)),$$ where $X,Y,Z,W \in \mathbb{D}$.

Note that the paracontact conformal curvature tensor $W^{pc}$ is
invariant under paracontact conformal transformations
($Theorem~1.1$ in \cite{S1}). $Theorem~1.2$ in \cite{S1} and
$Corollary~1.3$ in \cite{S1} show that if $M$ is a para
CR-manifold, the paracontact conformal curvature tensor $W^{pc}=0$
if and only if $M$ is locally paracontact conformal to the
standard flat paracontact metric structure, with respect to the
canonical paracontact connection $\widetilde{\nabla}$, on the
hyperbolic Heisenberg group $\mathcal{H}^{2n+1}$ or $M$ is locally
paracontact conformal to the hyperboloid
$\mathbf{H}^{2n+1}_{n+1}(1)$.

\begin{pro}
Let $M^{2n+1}(\varphi,\xi,\eta,g)$ be a paraSasakian manifold. If
the vector fields $X,Y,Z,W$ lie in $\mathbb {D}$, then
$\mathbf{B}(X,Y,Z,W)=W^{pc}(X,Y,Z,W)$.
\end{pro}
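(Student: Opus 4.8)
The plan is to compare the two tensors term by term by substituting the defining formula \eqref{f30} for $\mathbf{B}$ and the defining formula for $W^{pc}$, and to exploit that on a paraSasakian manifold $h=0$, so that by \eqref{f21} and \eqref{f50} the canonical connection $\widetilde\nabla$ and its curvature/Ricci tensor differ from the Levi-Civita ones by explicit lower-order correction terms built from $g$, $\eta$ and $\varphi$. First I would restrict all free indices to lie in $\mathbb{D}$; this kills every term carrying a factor $\eta$ in both expressions, so the $\eta$-terms on both sides simply drop out and need not be matched. On $\mathbb{D}$ the fundamental $2$-form $F=g(\cdot,\varphi\cdot)$ satisfies $F(X,Z)=\varphi_{xz}$ in index notation, so the ``$F\!F$'' terms of $W^{pc}$ are exactly the ``$\varphi\varphi$'' terms appearing in $\mathbf{B}$.

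Next I would compute $\widetilde R$ and $\widetilde{\mathbf r}$ restricted to $\mathbb{D}$. Using $h=0$, formula \eqref{f21} gives $\widetilde R_{ijkl}=R_{ijkl}+2\varphi_{ij}\varphi_{kl}$ when $i,j,k,l\in\mathbb{D}$ (all the $\eta$- and $\xi$-laden summands vanish upon contraction with vectors in $\mathbb D$, using $\eta_r\xi^r=1$, $\varphi^i_r\xi^r=0$ and $\eta_r\varphi^r_j=0$), and formula \eqref{f50} gives $\widetilde{\mathbf r}_{jk}=\mathbf r_{jk}-2g_{jk}$ on $\mathbb D$, after using \eqref{f4} to evaluate $R_{jsrk}\xi^s\xi^r$ and \eqref{f1} for $\nabla_r\eta_k\nabla_j\xi^r=\varphi_{rk}\varphi^r_j$, which on $\mathbb D$ equals $g_{jk}-\eta_j\eta_k=g_{jk}$. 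Contracting $\widetilde{\mathbf r}_{jk}=\mathbf r_{jk}-2g_{jk}$ over the $2n$-dimensional distribution (note $\widetilde\nabla$ and $\mathbf r$ already annihilate the $\xi$-direction appropriately) yields $\widetilde{\mathbf s}=\mathbf s-2n$, so that the curvature normalisation $k=-\tfrac{\mathbf s-2n}{2n+2}=-\tfrac{\widetilde{\mathbf s}}{2n+2}$ used in \eqref{f30} matches the coefficient $\tfrac{\widetilde{\mathbf s}}{4(n+1)(n+2)}$ appearing in $W^{pc}$ up to the arithmetic $-\tfrac{k}{2(n+2)}=\tfrac{\widetilde{\mathbf s}}{4(n+1)(n+2)}$.

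I would then substitute these three relations into the expression for $W^{pc}(X,Y,Z,W)$ with $X,Y,Z,W\in\mathbb D$: replace $\widetilde R$ by $R+2\varphi\varphi$, replace each $\widetilde{\mathbf r}(\,\cdot\,,\cdot\,)$ by $\mathbf r(\,\cdot\,,\cdot\,)-2g(\,\cdot\,,\cdot\,)$ (and $\widetilde{\mathbf r}(\,\cdot\,,\varphi\,\cdot\,)$ by $\mathbf r(\,\cdot\,,\varphi\,\cdot\,)-2g(\,\cdot\,,\varphi\,\cdot\,)=\mathbf r(\,\cdot\,,\varphi\,\cdot\,)-2F(\,\cdot\,,\cdot\,)$ on $\mathbb D$), and replace $\widetilde{\mathbf s}$ by $\mathbf s-2n=-(2n+2)k$. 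The $\mathbf r\!\cdot\!g$ and $\mathbf r\!\cdot\!F$ (i.e. $\mathbf r\!\cdot\!\varphi$) terms reproduce precisely the $\tfrac{1}{2n+4}(\mathbf r_{ik}g_{jl}-\cdots)$ block of \eqref{f30}; the $-2g$ pieces coming from $\widetilde{\mathbf r}$ combine with the pure $g\!\cdot\!g$ and $F\!\cdot\!F$ terms of $W^{pc}$ and with the $2\varphi\varphi$ from $\widetilde R$, and the bookkeeping should collapse exactly to the coefficients $-\tfrac{k+2n}{2n+4}$, $\tfrac{k-4}{2n+4}$ in \eqref{f30}. The main obstacle is purely this last algebraic reconciliation of the constant coefficients: one must carefully track how the $\tfrac{1}{2(n+2)}$ and $\tfrac{\widetilde{\mathbf s}}{4(n+1)(n+2)}$ normalisations of $W^{pc}$, the $\tfrac{1}{2n+4}$ of $\mathbf B$, the shift $\mathbf s\mapsto\mathbf s-2n$, and the contributions of the $-2g$ corrections interact, and check that the $\eta$-free parts agree identically with no residual term — a routine but error-prone computation. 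Once the coefficients match, the identity $\mathbf B(X,Y,Z,W)=W^{pc}(X,Y,Z,W)$ on $\mathbb D$ follows.
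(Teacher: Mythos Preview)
Your strategy is exactly the paper's: restrict both tensors to $\mathbb D$, rewrite $\widetilde R$, $\widetilde{\mathbf r}$, $\widetilde{\mathbf s}$ in terms of $R$, $\mathbf r$, $\mathbf s$ via \eqref{f21} and \eqref{f50}, and then match coefficients. The paper does precisely this, recording the restricted identities as \eqref{f55} and \eqref{f56} and the restricted form of $\mathbf B$ as \eqref{f57}.

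There is, however, one concrete slip in your restriction of $\widetilde R$. You assert that on $\mathbb D$ formula \eqref{f21} collapses to $\widetilde R_{ijkl}=R_{ijkl}+2\varphi_{ij}\varphi_{kl}$ because ``all the $\eta$- and $\xi$-laden summands vanish''. But the final two summands of \eqref{f21}, namely $\nabla_j\eta_k\,\nabla_i\xi^l-\nabla_i\eta_k\,\nabla_j\xi^l$, carry no undifferentiated $\eta$ or $\xi$: by \eqref{f1} they equal $-\varphi_{jk}\varphi^l_i+\varphi_{ik}\varphi^l_j$, which survive on $\mathbb D$. The correct restriction (the paper's \eqref{f55}) is
\[
\widetilde R(X,Y,Z,W)=R(X,Y,Z,W)-F(X,Z)F(Y,W)+F(Y,Z)F(X,W)-2F(X,Y)F(Z,W),
\]
i.e.\ three $FF$ terms, not one. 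With this correction (and the correspondingly corrected sign on the $2\varphi\varphi$ piece) your coefficient bookkeeping in the last paragraph will close; with only a single $\varphi\varphi$ term the $(\varphi_{ik}\varphi_{jl}-\varphi_{jk}\varphi_{il})$ block cannot be balanced and the identity fails. Your computations of $\widetilde{\mathbf r}_{jk}=\mathbf r_{jk}-2g_{jk}$ on $\mathbb D$ and $\widetilde{\mathbf s}=\mathbf s-2n$ are correct and agree with \eqref{f56}.
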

\begin{proof} For the paraSasakian manifold $M^{2n+1}(\varphi,\xi,\eta,g)$ from
identities (\ref{f21}) and (\ref{f50}) we obtain
\begin{equation}\label{f54}
\widetilde{R}(X,Y,Z,W)=R(X,Y,Z,W)-F(X,Z)F(Y,W)+F(Y,Z)F(X,W)-
\end{equation}
$$-2F(X,Y)F(Z,W)-g(X,Z)\eta(Y)\eta(W)+g(Y,Z)\eta(X)\eta(W)+$$
$$+g(X,W)\eta(Y)\eta(Z)-g(Y,W)\eta(X)\eta(Z),\quad X,Y,Z,W \in \Gamma(TM).$$
Consequently
\begin{equation}\label{f53}
\widetilde{\mathbf{r}}(X,Y)=\mathbf{r}(X,Y)-2g(X,Y)+2(n+1)\eta(X)\eta(Y),\quad
X,Y\in \Gamma(TM).
\end{equation}

Besides, $\widetilde{\mathbf{s}}=\mathbf{s}-2n$.

For $X,Y,Z,W \in \mathbb {D}$ the PC-Bochner curvature tensor has
the form
\begin{equation}\label{f57}
\mathbf{B}(X,Y,Z,W)=R(X,Y,Z,W)+\frac{1}{2(n+2)}(g(X,Z)\mathbf{r}(Y,W)-
\end{equation}
$$-g(Y,Z)\mathbf{r}(X,W)+g(Y,W)\mathbf{r}(X,Z)-g(X,W)\mathbf{r}(Y,Z))+F(X,Z)\mathbf{r}(Y,\varphi W)-$$
$$-F(Y,Z)\mathbf{r}(X,\varphi W)+F(Y,W)\mathbf{r}(X,\varphi Z)-F(X,W)\mathbf{r}(Y,\varphi Z)+2F(X,Y)\mathbf{r}(Z,\varphi W)+$$
$$+2F(Z,W)\mathbf{r}(X,\varphi Y))-\frac{k+2n}{2n+4}(F(X,Z)F(Y,W)-F(Y,Z)F(X,W)+$$
$$2F(X,Y)F(Z,W))+\frac{k-4}{2n+4}(g(X,Z)g(Y,W)-g(Y,Z)g(X,W)).$$

From equalities $(\ref{f54})$ and $(\ref{f53})$, for $X,Y,Z,W \in
\mathbb {D}$, we get
\begin{equation}\label{f55}
\widetilde{R}(X,Y,Z,W)=R(X,Y,Z,W)-F(X,Z)F(Y,W)+
\end{equation}
$$+F(Y,Z)F(X,W)-2F(X,Y)F(Z,W)$$
and
\begin{equation}\label{f56}
\widetilde{\mathbf{r}}(X,Y)=\mathbf{r}(X,Y)-2g(X,Y),\quad \quad
\widetilde{\mathbf{s}}=\mathbf{s}-2n.
\end{equation}
From $(\ref{f57})$, $(\ref{f55})$ and $(\ref{f56})$, we obtain
$\mathbf{B}(X,Y,Z,W)=W^{pc}(X,Y,Z,W)$.
\end{proof}

\section{Proof of Theorem~1.1}\label{gau1}

Suppose that $M^{2n+1}(\varphi,\xi,\eta,g)$ is a paraSasakian
manifold with constant paraholomorphic sectional curvature $k$.
From equalities $(\ref{f1})$, $(\ref{f2})$ and identities
$(\ref{f20})$, $(\ref{f21})$, we obtain
\begin{equation}\label{f22}
\widetilde{R}_{mjhl}=\frac{k-3}{4}(g_{ml}g_{hj}-g_{jl}g_{hm}+g_{hm}\eta_j\eta_l+g_{lj}\eta_m\eta_h-g_{lm}\eta_j\eta_h-
\end{equation}
$$-g_{hj}\eta_l\eta_m+\varphi_{hj}\varphi_{ml}-\varphi_{hm}\varphi_{jl}+2\varphi_{mj}\varphi_{hl}).$$
\begin{enumerate}
\item[i).] If $k=3$, then $\widetilde{R}_{mjhl}=0$ and Theorem~4.2
of \cite{S1} shows that $M$ is locally isomorphic to the
hyperbolic Heisenberg group $\mathcal{H}^{2n+1}$;
\end{enumerate}
\begin{enumerate}
\item[ii).] Let $k\not=3$. We shall adopt the following
convention: $x$, $\widetilde{\nabla}$, $\widetilde{R}$,\ldots
shall stand for points, connection and tensors of the manifold
$M$. On the other hand, $x^*$, $\widetilde{\nabla}^*$,
$\widetilde{R}^*$,\ldots shall stand for points, connection and
tensors of the manifold $\mathbf{H}(k)$.

First, let us note that $\widetilde{\nabla}T=0$ and
$\widetilde{\nabla}\widetilde{R}=0$. Indeed, the first one follows
from the fact that the horizontal part of the torsion $T$ is zero
and the identities $(\ref{tnweb1})$. The second one is yielded by
the identities $(\ref{tnweb1})$ and the fact that the curvature
tensor $\widetilde{R}$ has the form (\ref{f22}). Thus we have that
the manifold $M$ and the connection $\widetilde{\nabla}$ are
analytic \cite{K3}.

For any point $x$ of $M$ and $x^*$ of $\mathbf{H}(k)$, let
$(e_1,...,e_n,\varphi e_1,...,\varphi e_n,\xi)$ and
$(e^*_1,...,e^*_n,\varphi e^*_1,...,\varphi e^*_n,\xi^*)$ be
orthonormal basis at $x$ and $x^*$ respectively. We define a
linear isomorphism $F:T_x(M)\rightarrow T_{x^*}(\mathbf{H}(k))$ by
$Fe_i=e^*_i$, $F\varphi e_i=\varphi e^*_i$ ($i=1...n$) and
$F\xi=\xi^*$. Then we have $F\varphi=\varphi^*F$. Since both
paraholomorphic sectional curvatures are equal to $k$, $F$ maps
$\widetilde{R}$ into $\widetilde{R}^*$, F being consider as map of
tensor algebra. The covariant derivatives of $\varphi$ and $\xi$
are written in terms of $\varphi, \xi, g$ and hance the covariant
derivative of $R$ is expressed by $\varphi, \xi$ and $g$, that is,
$F$ maps $(\widetilde{\nabla}\widetilde{R})_x$ into
$(\widetilde{\nabla}^*\widetilde{R}^*)_{x^*}$. Likewise, we see
that $F$ maps $(\widetilde{\nabla^l}\widetilde{R})_x$ into
$(\widetilde{\nabla^l}^*\widetilde{R}^*)_{x^*}$ for every positive
integer $l$. Then we have an isomorphism $f$ of $M$ onto
$\mathbf{H}(k)$ such that $f(x)=x^*$ and the differential of $f$
at $x$ is $F$ (e.g. Theorem 7.2, p. 259 in \cite{K3}). We then
have that $(\widetilde{\nabla}\xi)_x$ is mapped to
$(\widetilde{\nabla}^*\xi^*)_{x^*}$. Thus we have
$$(\widetilde{\nabla}^*(f\xi))_{x^*}=f \cdot((\widetilde{\nabla}\xi)_x)=F((\widetilde{\nabla}\xi)_x)=(\widetilde{\nabla}^*\xi^*)_{x^*}.$$
Since $f$ is an isomorphism, $f\xi$ is also Killing vector field.
By $(f\xi)_{x^*}=\xi^*_{x^*}$ and
$\widetilde{\nabla}^*(f\xi))_{x^*}=(\widetilde{\nabla}^*\xi^*)_{x^*}$,
we get $f\xi=\xi^*$. Because $\varphi$ and $\eta$ ($\varphi^*$ and
$\eta^*$) are determined by $g$ and $\xi$ ($g^*$ and $\xi^*$), $f$
is an isomorphism between $M$ and $\mathbf{H}(k)$.

%Note that $\mathbf{H}^{2n+1}_{n+1}$ has constant section curvature
%equal to $-1$ (\cite{W1}). In particular,
%$\mathbf{H}^{2n+1}_{n+1}$ has paraholomorphic section curvature
%$k^{\mathbf{H}^{2n+1}_{n+1}}=-1$. From the equality (\ref{f22}),
%we obtain that
%\begin{equation}\label{f23}
%\widetilde{R}_{mjhl}^{\mathbf{H}^{2n+1}_{n+1}}=-(g_{ml}g_{hj}-g_{jl}g_{hm}+g_{hm}\eta_j\eta_l+g_{lj}\eta_m\eta_h-g_{lm}\eta_j\eta_h-
%\end{equation}
%$$-g_{hj}\eta_l\eta_m+\varphi_{hj}\varphi_{ml}-\varphi_{hm}\varphi_{jl}+2\varphi_{mj}\varphi_{hl}).$$
%Now, from properties \eqref{tnweb} we may conclude
%\begin{equation}\label{f24}
%\widetilde{\nabla}\widetilde{R}^{\mathbf{H}^{2n+1}_{n+1}}=0
%\end{equation}
%On the other hand, from the equalities $(\ref{tnweb1})$ and
%\eqref{f22} we get
%\begin{equation}\label{f25}
%\widetilde{\nabla}T=0,\quad\widetilde{\nabla}\widetilde{R}=0.
%\end{equation}
%Now, using the identities \eqref{f24} and \eqref{f25} we obtain
%that $M$ and $\mathbf{H}^{2n+1}_{n+1}$ are locally isomorphic
%(e.g. Theorem 7.2, p. 259 in \cite{K3}).
\end{enumerate}

\bibliographystyle{hamsplain}

% \vspace{5mm}

% \noindent University of Sofia "St. Kl. Ohridski"   \\

% Faculty of Mathematics and Informatics,\\

% blvd. James Bourchier 5,\\

% 1164 Sofia, Bulgaria\\

\end{document}